\definecolor{Chocolat}{rgb}{0.36, 0.2, 0.09}
\definecolor{BleuTresFonce}{rgb}{0.215, 0.215, 0.36}
\definecolor{EgyptianBlue}{rgb}{0.06, 0.2, 0.65}
\newtheorem{theorem}{Theorem}
\newtheorem{corollary}[theorem]{Corollary}
\newtheorem{lemma}[theorem]{Lemma}
\newtheorem{proposition}[theorem]{Proposition}
\theoremstyle{definition}
\newtheorem{example}[theorem]{Example}
\newtheorem{remark}[theorem]{Remark}
\newtheorem{definition}[theorem]{Definition}
\def\reg{{\rm reg}}\def\sng{{\rm sng}}
\def\tlustatecka{ \blacksquare}
\def\tecka{\hbox {\Large $\bullet$}}
\def\edg{{\rm edg}}
\def\mn{\mu^{(n+1)}}
\def\card{{\rm card}}
\def\pa{{\partial}}
\def\Ass{\hbox{$\mathcal{A}\!ss$}}
\def\bfk{{\mathbf k}}\def\tAss{{t\Ass}}
\def\tildeAss{\redukce{$\widetilde{\Ass}$}}
\def\calP{{\mathcal P}}
\def\ptildeAss{{p\tildeAss}{}}
\def\redukce#1{\vbox to .3em{\vss\hbox{#1}}}
\def\EE#1{{\mathcal E}^{(#1)}}
\newcommand{\ac}{\scriptstyle \text{\rm !`}}
\begin{document}

\title[Non-Koszulness and positivity]{Non-Koszulness of operads and positivity of Poincar\'e series}

\author{Vladimir Dotsenko}
\address{School of Mathematics, Trinity College, Dublin 2, Ireland}
\email{vdots@maths.tcd.ie}

\author{Martin Markl}
\address{Institute of Mathematics of the Czech Academy of Sciences, 
\v{Z}itn\'a 25, 115 67 Prague 1, The Czech Republic, and 
MFF UK, Sokolovsk\'a 83, 186 75 Prague 8, The Czech Republic}
\email{markl@math.cas.cz}

\author{Elisabeth Remm}
\address{Laboratoire de Math\'ematiques et Applications, Universit\'e de Haute Alsace, Facult\'e des
Sciences et Techniques, 4, rue des Fr\`eres Lumi\`ere, 68093 Mulhouse cedex, France}
\email{Elisabeth.Remm@uha.fr}

\keywords{Operad, Koszul duality, Koszulness, Zeilberger's algorithm}
\subjclass[2010]{18D50 (Primary), 18G55, 33F10, 55P48 (Secondary)}

\begin{abstract}
We prove that the operad of mock partially associative $n$-ary algebras is not Koszul, as conjectured by the second and the third author in 2009, and utilise the Zeilberger's algorithm for hypergeometric summation to demonstrate that non-Koszulness of that operad for $n=8$ cannot be established by hunting for negative coefficients in the inverse of its Poincar\'e series. 
\end{abstract}

\maketitle

\section*{Introduction}

\subsection*{Summary of results}

Koszul duality theory for operads was developed in the seminal paper \cite{GiKa}, where it is established that among operads with quadratic relations there is an important subclass formed by Koszul operads. The category of algebras over a Koszul operad enjoys particularly nice homotopical properties. For that reason, it is important to have tools to establish whether an operad is Koszul: if it is Koszul, many questions about its algebras are answered automatically by the methods of \cite{GiKa}, if it is not Koszul, studying the homotopy category of algebras over that operad is a more unusual and challenging task. 
Currently, the most general way to establish that an operad is Koszul seems to come from operadic Gr\"obner bases \cite{BrDo,DoKh}, and the most general way to establish that an operad is not Koszul relies on a functional equation established in \cite{GiKa}. The latter equation, in slightly more modern terms, says that for a Koszul operad $\mathcal{P}$, we have
 \[
g_{\mathcal{P}}(g_{\mathcal{P}^{\ac}}(t))=t ,
 \]
where $\mathcal{P}^{\ac}$ is the Koszul dual cooperad, and $g$ is the Poincar\'e series (the generating series for the Euler characteristics of components). 

The paper \cite{GiKa} is mostly concerned with operads whose generating operations are all binary; algebras over such operads appear in applications more frequently (for example the most famous operads ever studied, those of associative algebras, commutative associative algebras, and Lie algebras, belong to that class). While it is not hard to extend Koszul duality to the case of operads whose generating operations may be of different arities (this was first done in \cite{GetJones}), or at least not all binary, some early papers on the subject ignored crucial homological degree shifts, and as a consequence some claims made in those papers were wrong. For example, the operad called the operad of $n$-ary partially associative algebras in \cite{Gn1,Gn2}, only resembles the Koszul dual operad of the operad of totally associative algebras, contrary to the claims made there. 

Recently, several examples of $n$-ary operads (that is, operads generated by operations of the same arity~$n$) were studied by the second and the third author in the papers  \cite{MaRe1,MaRe2} the first of which was circulated as a preprint back in 2009. The defining relations of those operads describe various types of ``graded $n$-associativity'' and resemble the defining relations of the operads of totally associative and partially associative $n$-ary algebras, but have different signs and homological degrees in the definition. For the latter reason, we refer to them as operads of \emph{mock} totally / partially associative $n$-ary algebras. In \cite{MaRe1,MaRe2}, some of those operads were proved to be Koszul, some of them were proved to not be Koszul, and finally, the remaining ones were conjectured to not be Koszul. In fact, it is quite easy to describe those conjecturally non-Koszul operads. Fix  $n\geq 2$. The operad $\ptildeAss^n_0$ of mock partially associative $n$-ary algebras is generated by one operation $\mu$ of arity $n$ and of degree $0$ satisfying one single relation
\[
\sum_{i=1}^n 
\mu\circ_i\mu =0.
\]
In \cite{MaRe1,MaRe2}, the operads $\ptildeAss^n_0$ are proved to be non-Koszul for $n\le 7$, and it was conjectured that they are not Koszul for all $n\ge 2$. 

The Koszul dual cooperad of $\ptildeAss^n_0$ is the cooperad $(\tAss^n_{1})^c$, whose coalgebras are mock totally coassociative coalgebras (with one operation of arity $n$ and degree $1$); that operad has an extremely simple Poincar\'e series $t-t^n+t^{2n-1}$. In this paper, we establish two results. First, we prove that the operad $\ptildeAss^n_0$ is not Koszul. For that, we establish and utilise a rather surprising combinatorial formula representing a certain element in the cobar construction of $(\tAss^n_{1})^c$ as a boundary. Second, we check that the inverse series of $t-t^n+t^{2n-1}$ for $n=8$ does not have any negative coefficients (so a positivity criterion of Koszulness based on the Ginzburg--Kapranov functional equation is not of any help); for that we make use of the Zeilberger's algorithm for hypergeometric summation.

\subsection*{Plan of the paper}

In Section \ref{sec:criteria}, we recall the key definitions needed throughout the paper.
In Section \ref{sec:nonKoszul}, we prove that the mock partially associative operad is not Koszul.
In Section \ref{sec:inverse}, we show that the result of the previous section cannot be obtained using the positivity criterion of Koszulness.

\subsection*{Funding} \thanks{The second author was supported by the
Eduard \v Cech Institute [grant P201/12/G028] and the Academy of
Sciences of the Czech Republic [grant RVO 67985840]. The final
revision was moreover supported by a grant GA \v CR [18-07776S to
M.M.] and by Pr{\ae}mium Academiae of Martin Markl.  }

\subsection*{Acknowledgements} The final draft of this paper was prepared at Max Planck Institute for Mathematics in Bonn, where the authors' stay was supported through the programme ``Higher Structures in Geometry and Physics''. The authors wish to thank MPIM for for the excellent working conditions enjoyed during their visit. The authors are also grateful to David Speyer who both provided the answer \cite{Speyer} on the \texttt{MathOverflow} website which convinced them of positivity of coefficients of the inverse series for $t-t^8+t^{15}$ and pointed out a gap in the proof of positivity in a draft version of this paper.

\section{(Non-)Koszulness and its criteria}\label{sec:criteria}

Throughout this paper, we follow the notational conventions set out in \cite{LoVa}. We briefly recall the most important notational conventions and definitions, and refer the reader to \cite[Chapter 7]{LoVa} for the details. All the results of this paper are valid for an arbitrary field $\bfk$ of characteristic zero. We use a formal symbol $s$ of homological degree $1$ to encode suspensions and de-suspensions.  

Unless otherwise specified, all operads $\mathcal{P}$ discussed in this paper are nonsymmetric, that is they are  monoids in the monoidal category of nonsymmetric collections; the monoidal structure in that latter category is denoted $\circ$. In addition, all operads are implicitly assumed reduced ($\mathcal{P}(0)=0$) and connected ($\mathcal{P}(1)\cong\bfk$). Throughout this paper, we use the abbreviation `ns' instead of the word `nonsymmetric'.  We use the notation $\mathcal{X}\cong\mathcal{Y}$ 
for isomorphisms of ns collections, and the notation $\mathcal{X}\simeq\mathcal{Y}$ for weak equivalences (quasi-isomorphisms). 

The free operad generated by a ns collection $\mathcal{X}$ is denoted $\mathcal{T}(\mathcal{X})$, the cofree (conilpotent) cooperad cogenerated by a ns collection $\mathcal{X}$ is denoted $\mathcal{T}^c(\mathcal{X})$; the former is spanned by ``tree tensors'', and has its  composition product, and the latter has the same underlying ns collection but a different structure, a decomposition coproduct. The underlying ns collection of each of those is weight graded (a tree tensor has weight $p$ if its underlying tree has $p$ internal vertices), and we denote by $\mathcal{T}(\mathcal{X})^{(p)}$ the subcollection which is the span of all tree tensors of weight $p$. Infinitesimal (partial) composition products on $\mathcal{T}(\mathcal{X})$ are denoted $\circ_i$.

\subsection{Koszul duality for quadratic (co)operads}

A pair consisting of a ns collection $\mathcal{X}$ and a subcollection $\mathcal{R}\subset\mathcal{T}(\mathcal{X})^{(2)}$ is called \emph{quadratic data}. To a choice of quadratic data one can associate the \emph{quadratic operad $\mathcal{P}=\mathcal{P}(\mathcal{X},\mathcal{R})$ with generators $\mathcal{X}$ and relations $\mathcal{R}$}, the largest quotient operad $\mathcal{O}$ of $\mathcal{T}(\mathcal{X})$ for which the composite
\[
\mathcal{R}\hookrightarrow\mathcal{T}(\mathcal{X})^{(2)}\hookrightarrow\mathcal{T}(\mathcal{X})\twoheadrightarrow\mathcal{O}
\]
is zero. Also, to a choice of quadratic data one can associate the \emph{quadratic cooperad $\mathcal{C}=\mathcal{C}(\mathcal{X},\mathcal{R})$ with cogenerators $\mathcal{X}$ and corelations $\mathcal{R}$}, the largest subcooperad $\mathcal{Q}\subset\mathcal{T}^c(\mathcal{X})$ for which the composite
 \[
\mathcal{Q}\hookrightarrow\mathcal{T}^c(\mathcal{X})\twoheadrightarrow\mathcal{T}^c(\mathcal{X})^{(2)}\twoheadrightarrow\mathcal{T}^c(\mathcal{X})^{(2)}/\mathcal{R}
 \]
is zero.

\begin{definition}[Koszul duality]
Let $(\mathcal{X},\mathcal{R})$ be a choice of quadratic data. The Koszul duality for operads assigns to an operad $\mathcal{P}=\mathcal{P}(\mathcal{X},\mathcal{R})$ its \emph{Koszul dual cooperad}
 \[
\mathcal{P}^{\ac} := \mathcal{C}(s\mathcal{X},s^2\mathcal{R}) . 
 \]  
\end{definition}

Recall that the (left) Koszul complex of a ns quadratic operad $\mathcal{P}=\mathcal{P}(\mathcal{X},\mathcal{R})$ is the ns collection $\mathcal{P}\circ\mathcal{P}^{\ac}$ equipped with a certain differential coming from a ``twisting morphism''
 \[
\varkappa\colon \mathcal{C}(s\mathcal{X},s^2\mathcal{R})\twoheadrightarrow s\mathcal{X}\to \mathcal{X}\hookrightarrow\calP(\mathcal{X},\mathcal{R}) , 
 \]
see \cite[Sec.~7.4]{LoVa} for details. 

\begin{definition}[Koszul operad]
A quadratic operad $\mathcal{P}$ is said to be \emph{Koszul} if its Koszul complex is acyclic, so that the inclusion
 \[
\bfk\cong(\mathcal{P}\circ\mathcal{P}^{\ac})(1)\hookrightarrow\mathcal{P}\circ\mathcal{P}^{\ac} 
 \]
induces an isomorphism in the homology. 
\end{definition}

For a cooperad $\mathcal{C}$, its cobar construction $\Omega(\mathcal{C})$ is, by definition, the chain complex obtained by equipping the free operad $\mathcal{T}(s^{-1}\mathcal{C})$ with the differential coming from the infinitesimal decomposition coproducts on $\mathcal{C}$. It is known \cite[Prop.~7.3.2]{LoVa} that for a Koszul operad $\mathcal{P}$ there is a weak equivalence $\Omega(\mathcal{P}^{\ac})\simeq\mathcal{P}$; that is, the cobar construction $\Omega(\mathcal{P}^{\ac})$ represents the \emph{minimal model} of $\mathcal{P}$, see \cite{Ma} for the precise definition. 

\subsection{Poincar\'e series for operads and the positivity criterion for Koszulness}

A very useful numerical invariant of a ns collection is given by its Poincar\'e series. 

\begin{definition}[Poincar\'e series]
Let $\mathcal{X}$ be a ns collection with finite-dimensional components. The generating series for Euler characteristics of components of $\mathcal{X}$ is called the \emph{Poincar\'e series} of $\mathcal{X}$ and is denoted by $g_{\mathcal{X}}(t)$:
 \[
g_{\mathcal{X}}(t) = \sum_{n\ge 0}\chi(\mathcal{X}(n))t^n .
 \]
\end{definition}

An important property of the Poincar\'e series is that it is compatible with the ns composition $\circ$.

\begin{proposition}[{\cite[Prop.~4.1.7]{GiKa}}]\label{prop:Comp}
Let $\mathcal{X}$ and $\mathcal{Y}$ be two ns collections  with finite-dimensional components. Then
 \[
g_{\mathcal{X}\circ\mathcal{Y}}(t)=g_{\mathcal{X}}(g_{\mathcal{Y}}(t)) .
 \]
\end{proposition}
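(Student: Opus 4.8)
The plan is to reduce the statement to two elementary properties of the Euler characteristic, namely additivity over direct sums and multiplicativity under tensor products, after unfolding the definition of the ns composition product $\circ$.

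First I would recall the explicit description of the arity-$n$ component of $\mathcal{X}\circ\mathcal{Y}$. In the nonsymmetric setting it is the direct sum, over all $k\ge 0$ and over all ordered decompositions $n_1+\cdots+n_k=n$, of the summands $\mathcal{X}(k)\otimes\mathcal{Y}(n_1)\otimes\cdots\otimes\mathcal{Y}(n_k)$; no symmetric-group coinvariants enter, precisely because there is no symmetric group action to quotient by. The standing reducedness convention gives $\mathcal{Y}(0)=0$, so in each such decomposition every $n_i\ge 1$ and hence $k\le n$; this guarantees that $(\mathcal{X}\circ\mathcal{Y})(n)$ is a finite direct sum of finite-dimensional spaces, and in particular finite-dimensional, so that its Euler characteristic is defined.

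Next I would take Euler characteristics. Using that $\chi$ is additive over direct sums and that $\chi(V\otimes W)=\chi(V)\,\chi(W)$ for finite-dimensional graded spaces, I obtain
\[
\chi\bigl((\mathcal{X}\circ\mathcal{Y})(n)\bigr)
=\sum_{k\ge 0}\chi(\mathcal{X}(k))\sum_{n_1+\cdots+n_k=n}\ \prod_{i=1}^{k}\chi(\mathcal{Y}(n_i)).
\]
Multiplying by $t^n$, summing over $n$, and using $t^n=t^{n_1}\cdots t^{n_k}$ to distribute the monomial across the parts, the inner sum factors as a $k$-th power:
\[
g_{\mathcal{X}\circ\mathcal{Y}}(t)
=\sum_{k\ge 0}\chi(\mathcal{X}(k))\,\Bigl(\sum_{m\ge 0}\chi(\mathcal{Y}(m))\,t^m\Bigr)^{k}
=\sum_{k\ge 0}\chi(\mathcal{X}(k))\,g_{\mathcal{Y}}(t)^{k}
= g_{\mathcal{X}}\bigl(g_{\mathcal{Y}}(t)\bigr),
\]
which is exactly the asserted identity.

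The only point that needs care, and the single place where the hypotheses are used, is the legitimacy of these formal rearrangements. Because $\mathcal{Y}(0)=0$, the series $g_{\mathcal{Y}}(t)$ has vanishing constant term, so $g_{\mathcal{Y}}(t)^{k}$ is divisible by $t^{k}$ and the substitution $g_{\mathcal{X}}(g_{\mathcal{Y}}(t))$ is a well-defined element of $\bfk[[t]]$: for each fixed power $t^n$ only the finitely many indices $k\le n$ contribute. Thus the interchange of summations is purely formal and involves no convergence issue. The identity $\chi(V\otimes W)=\chi(V)\chi(W)$, which I would verify directly from $(V\otimes W)_j=\bigoplus_{a+b=j}V_a\otimes W_b$ together with the sign rule $(-1)^{a+b}=(-1)^a(-1)^b$, is what converts the tensor structure of $\circ$ into the multiplicative structure of composition of series; everything else is bookkeeping.
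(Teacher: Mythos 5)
The paper does not prove this proposition itself but simply cites it from Ginzburg--Kapranov \cite[Prop.~4.1.7]{GiKa}; your argument is the standard proof of that cited result and is correct. Unfolding the ns composition product into its direct sum over ordered decompositions, applying additivity and multiplicativity of the Euler characteristic, and noting that reducedness of $\mathcal{Y}$ makes the substitution of formal power series well defined is exactly what is needed, and you have handled the one delicate point (the legitimacy of the formal rearrangement) explicitly.
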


\begin{corollary}\label{cor:FunEq}Let $\mathcal{P}$ be a ns operad with finite-dimensional components. 
\begin{itemize}
\item[\textnormal{(i)}] If $\mathcal{P}$ is Koszul, then 
\begin{equation}\label{eq:GKfun}
g_{\mathcal{P}}(g_{\mathcal{P}^{\ac}}(t))=t .
\end{equation}
\item[\textnormal{(ii)}] More generally, if 
 \[
(\mathcal{T}(\mathcal{E}),\pa)\simeq (\mathcal{P},0) 
 \] 
is the minimal model of $\mathcal{P}$, then
\begin{equation}
\label{Jaruska_mi_udelala_svickovou!}
g_\mathcal{P}\left(t-g_\mathcal{E}(t)\right) = t .
\end{equation}
\end{itemize}
\end{corollary}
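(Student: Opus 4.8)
The plan is to deduce both parts from the composition formula for Poincar\'e series (Proposition~\ref{prop:Comp}) together with the fact that the Euler characteristic of a finite-dimensional complex equals that of its homology; part~(i) admits a short direct argument, and part~(ii) then specialises to it.

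I would prove (i) first and directly. By definition, Koszulness of $\mathcal{P}$ means the Koszul complex $\mathcal{P}\circ\mathcal{P}^{\ac}$ is acyclic, i.e.\ its homology is $\bfk$ concentrated in arity $1$. The Koszul differential preserves arity, so for each $n$ we have a finite-dimensional complex $(\mathcal{P}\circ\mathcal{P}^{\ac})(n)$ whose Euler characteristic equals that of its homology; hence $\chi\big((\mathcal{P}\circ\mathcal{P}^{\ac})(1)\big)=1$ and $\chi\big((\mathcal{P}\circ\mathcal{P}^{\ac})(n)\big)=0$ for $n\neq1$, that is, $g_{\mathcal{P}\circ\mathcal{P}^{\ac}}(t)=t$. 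Proposition~\ref{prop:Comp} identifies the left-hand side with $g_{\mathcal{P}}\big(g_{\mathcal{P}^{\ac}}(t)\big)$, giving \eqref{eq:GKfun} immediately.

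For (ii) the key input is a functional equation for the Poincar\'e series of a free operad. I would use the isomorphism of ns collections
\[
\mathcal{T}(\mathcal{E})\cong\mathcal{I}\oplus\big(\mathcal{E}\circ\mathcal{T}(\mathcal{E})\big),
\]
where $\mathcal{I}$ is the unit collection ($\mathcal{I}(1)\cong\bfk$ and zero otherwise): a decorated planar tree is either trivial or has a root vertex decorated by $\mathcal{E}$ carrying a tuple of grafted decorated trees. Passing to Euler characteristics, using additivity over direct sums and Proposition~\ref{prop:Comp}, yields $g_{\mathcal{T}(\mathcal{E})}(t)=t+g_{\mathcal{E}}\big(g_{\mathcal{T}(\mathcal{E})}(t)\big)$. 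Since $\mathcal{E}(1)=0$ for the minimal model of a connected operad, $g_{\mathcal{E}}(t)=O(t^2)$ and this relation pins down $g_{\mathcal{T}(\mathcal{E})}$ as a formal power series. Because the Euler characteristic only sees homology and the weak equivalence $(\mathcal{T}(\mathcal{E}),\pa)\simeq(\mathcal{P},0)$ computes it arity by arity, we get $g_{\mathcal{T}(\mathcal{E})}=g_{\mathcal{P}}$, so $g_{\mathcal{P}}(t)=t+g_{\mathcal{E}}\big(g_{\mathcal{P}}(t)\big)$. Writing $\psi(t):=t-g_{\mathcal{E}}(t)$ this reads $\psi\big(g_{\mathcal{P}}(t)\big)=t$; as $\psi(t)=t+O(t^2)$ and $g_{\mathcal{P}}(t)=t+O(t^2)$ have invertible linear terms they are mutually inverse under composition, and composing on the opposite side gives $g_{\mathcal{P}}\big(t-g_{\mathcal{E}}(t)\big)=t$, which is \eqref{Jaruska_mi_udelala_svickovou!}. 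Part~(i) is then the special case in which $\mathcal{P}$ is Koszul and $\Omega(\mathcal{P}^{\ac})$ is its minimal model, so $\mathcal{E}$ is the desuspension $s^{-1}$ of the coaugmentation coideal of $\mathcal{P}^{\ac}$.

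The only genuinely delicate point --- and historically the source of the errors alluded to in the introduction --- is the bookkeeping of homological degrees when passing from (ii) back to (i). One must remember that $s^{-1}$ flips the sign of the Euler characteristic and that the coaugmentation coideal removes the arity-$1$ part, so that $g_{\mathcal{E}}(t)=t-g_{\mathcal{P}^{\ac}}(t)$ and hence $t-g_{\mathcal{E}}(t)=g_{\mathcal{P}^{\ac}}(t)$; substituting this into \eqref{Jaruska_mi_udelala_svickovou!} recovers \eqref{eq:GKfun}. The combinatorial functional equation and the homotopy invariance of the Euler characteristic are routine; the care lies entirely in these grading conventions, which is precisely why the Poincar\'e series must be that of the genuinely (de)suspended collections rather than of their ungraded analogues.
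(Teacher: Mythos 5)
Your argument is correct and follows the same routes the paper indicates: part (i) directly from acyclicity of the Koszul complex plus Proposition~\ref{prop:Comp}, and part (ii) from the functional equation for the Poincar\'e series of a quasi-free operad together with homotopy invariance of the Euler characteristic, which is the content of the references (\cite{MaRe1} and \cite[Th.~6.6.2]{LoVa}) that the paper's one-line proof defers to. Your careful treatment of the degree shift $\mathcal{E}=s^{-1}\overline{\mathcal{P}^{\ac}}$, giving $t-g_{\mathcal{E}}(t)=g_{\mathcal{P}^{\ac}}(t)$, correctly reconciles (ii) with (i).
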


\begin{proof}
The claim (i) follows from either the more general (ii), or from the definition of the Koszul operad using the Koszul complex. 
The claim (ii) is proved in \cite{MaRe1}; it also immediately follows from Proposition \ref{prop:Comp} and \cite[Th.~6.6.2]{LoVa}). 
\end{proof}

Equation \eqref{eq:GKfun} provides an obvious necessary condition for an operad to be Koszul. However, in many cases it is too hard to compute the Poincar\'e series of both $\mathcal{P}$ and $\mathcal{P}^{\ac}$. For that reason, the following weaker result is used in many known proofs of non-Koszulness in the available literature.

\begin{corollary}[Positivity criterion]\label{cor:GKweak}
Suppose that $\mathcal{P}$ is a quadratic ns operad with finite-dimensional components generated by operations of homological degree zero. If the compositional inverse of either of the two power series $g_{\mathcal{P}^{\ac}}(t)$ and $g_{\mathcal{P}}(t)$  has at least one negative coefficient, then $\mathcal{P}$ is not Koszul.
\end{corollary}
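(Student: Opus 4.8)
The plan is to argue by contraposition, extracting the desired positivity directly from the Ginzburg--Kapranov functional equation \eqref{eq:GKfun}. So I would assume that $\mathcal{P}$ is Koszul and show that then the compositional inverse of $g_{\mathcal{P}^{\ac}}(t)$ has only non-negative coefficients; this is precisely the negation of the hypothesis, and the corollary follows.

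First I would check that the compositional inverse in question is well defined. Since the operads here are reduced and connected, the same holds for the cooperad $\mathcal{P}^{\ac}$, so $g_{\mathcal{P}^{\ac}}(t)$ has vanishing constant term and its coefficient of $t$ equals $\chi(\mathcal{P}^{\ac}(1))=1$. Thus $g_{\mathcal{P}^{\ac}}(t)=t+O(t^2)$ admits a unique compositional inverse among formal power series. By Corollary \ref{cor:FunEq}(i), the Koszulness of $\mathcal{P}$ yields $g_{\mathcal{P}}(g_{\mathcal{P}^{\ac}}(t))=t$, so that compositional inverse is exactly $g_{\mathcal{P}}(t)$.

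The key step is then to show that $g_{\mathcal{P}}(t)$ has non-negative coefficients, and this is where the hypothesis on homological degrees is used. The operad $\mathcal{P}=\mathcal{P}(\mathcal{X},\mathcal{R})$ is a quotient of the free operad $\mathcal{T}(\mathcal{X})$; since the generators $\mathcal{X}$ are concentrated in homological degree zero, so is $\mathcal{T}(\mathcal{X})$, and hence so is every component $\mathcal{P}(n)$. Consequently the Euler characteristic collapses to the ordinary dimension, $\chi(\mathcal{P}(n))=\dim_{\bfk}\mathcal{P}(n)\ge 0$, so every coefficient of $g_{\mathcal{P}}(t)$ is non-negative. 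Combining the two steps: if $\mathcal{P}$ is Koszul, then the compositional inverse of $g_{\mathcal{P}^{\ac}}(t)$ coincides with $g_{\mathcal{P}}(t)$ and has no negative coefficient, which is the contrapositive of the assertion.

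I expect the proof to be essentially a formal manipulation of \eqref{eq:GKfun}, with the only point requiring care being the bookkeeping of homological degrees, namely the observation that passing to a quadratic quotient of a free operad on degree-zero generators cannot produce classes in nonzero degree, so that the relevant Euler characteristics reduce to honest dimensions. Everything else is the uniqueness of the compositional inverse of a power series beginning with $t$.
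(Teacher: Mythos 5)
Your proof is correct and is exactly the argument the paper intends: the corollary is stated as an immediate consequence of Corollary \ref{cor:FunEq}(i), the point being that a Koszul $\mathcal{P}$ with degree-zero generators is concentrated in homological degree zero, so $g_{\mathcal{P}}(t)$ --- which the functional equation identifies with the compositional inverse of $g_{\mathcal{P}^{\ac}}(t)$ --- has non-negative coefficients. The paper leaves this proof implicit; your write-up supplies precisely the missing details (existence and uniqueness of the inverse, and the collapse of Euler characteristics to dimensions).
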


This criterion (or its mild variations) was utilised, for instance, in \cite{GetKa} for the ``mock Lie'' operad and the ``mock-commutative operad'', in \cite{Val} for some Manin products of operads, and in \cite{MaRe1,MaRe2} for some other mock operads of $n$-ary algebras. 

\subsection{The gap criterion for $n$-ary operads}

We fix $n\ge 2$. Suppose that $\mathcal{P}$ is an $n$-ary quadratic operad. The operad $\mathcal{P}$ has a weight grading, and so does its minimal model $(\mathcal{T}(\mathcal{E}),\pa)\simeq(\mathcal{P},0)$; we denote by $\mathcal{E}^{(p)}$ the subcollection of $\mathcal{E}$ consisting of all elements of weight $p$.
It is clear that $\mathcal{P}^{(p)}(m)=\mathcal{E}^{(p)}(m)=0$ unless $m=p(n-1)+1$ for some $p\geq0$.

\begin{definition}[{\cite[Def.~3.2]{MaRe2}}]
The minimal model $(\mathcal{T}(\mathcal{E}),\pa)$ of an $n$-ary operad has a \emph{gap of length $d \geq 1$} if there is a $q \geq 2$ such that 
\[
\mathcal{E}^{(p)} = 0 \text{ for } q \leq p \leq q+d-1
\]
while $\mathcal{E}^{(q-1)} \not= 0 \not = \mathcal{E}^{(q+d)}$.
\end{definition}

\begin{proposition}[Gap criterion, {\cite{MaRe1}}]\label{prop:Gap}
Suppose that the minimal model of a quadratic $n$-ary operad $\mathcal{P}$ has a gap of finite length. Then $\mathcal{P}$ is not Koszul.
\end{proposition}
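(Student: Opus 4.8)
The plan is to establish the contrapositive: no Koszul operad has a gap of finite length in its minimal model. So assume that $\mathcal{P}$ is Koszul. By \cite[Prop.~7.3.2]{LoVa}, recalled above, the cobar complex $\Omega(\mathcal{P}^{\ac}) = \mathcal{T}(s^{-1}\mathcal{P}^{\ac})$ then represents the minimal model of $\mathcal{P}$. Since the minimal model is unique up to isomorphism, and any such isomorphism carries the space of generators (together with its weight grading) to the space of generators, the generating collection of $(\mathcal{T}(\mathcal{E}), \pa)$ satisfies $\mathcal{E} \cong s^{-1}\overline{\mathcal{P}^{\ac}}$, where $\overline{\mathcal{P}^{\ac}}$ denotes the positive-weight part. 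As the desuspension $s^{-1}$ alters only the homological degree, this is an isomorphism of weight-graded collections; in particular, for every $p \geq 1$ we have $\mathcal{E}^{(p)} = 0$ if and only if $(\mathcal{P}^{\ac})^{(p)} = 0$.

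It therefore suffices to prove a \emph{connectivity} property of the Koszul dual cooperad, namely that its nonzero weights form an initial segment of $\mathbb{N}$:
\[
(\mathcal{P}^{\ac})^{(q)} = 0 \quad\Longrightarrow\quad (\mathcal{P}^{\ac})^{(p)} = 0 \ \text{ for all } p \geq q .
\]
To prove this I would pass to the graded linear dual. Because $\mathcal{P}^{\ac} = \mathcal{C}(s\mathcal{X}, s^2\mathcal{R})$ has finite-dimensional components, its componentwise dual is a quadratic operad $\mathcal{Q}$ with $\dim \mathcal{Q}^{(p)}(m) = \dim (\mathcal{P}^{\ac})^{(p)}(m)$ for all $p$ and $m$. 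In $\mathcal{Q}$, every tree with $p+1$ internal vertices is obtained from one with $p$ internal vertices by grafting a single generator at one of its leaves, so the partial composition maps $\mathcal{Q}^{(p)}\circ_i \mathcal{Q}^{(1)} \to \mathcal{Q}^{(p+1)}$ are collectively surjective. Hence $\mathcal{Q}^{(p)} = 0$ forces $\mathcal{Q}^{(p+1)} = 0$, and the displayed implication holds for $\mathcal{Q}$, and thus, by the equality of dimensions, for $\mathcal{P}^{\ac}$.

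Combining the two steps finishes the argument: a gap of finite length $d \geq 1$ provides, for some $q \geq 2$, a weight with $\mathcal{E}^{(q)} = 0$ while $\mathcal{E}^{(q+d)} \neq 0$; translating through the first paragraph this says $(\mathcal{P}^{\ac})^{(q)} = 0$ but $(\mathcal{P}^{\ac})^{(q+d)} \neq 0$, contradicting the connectivity property. Hence a Koszul operad cannot have such a gap, which is the contrapositive of the proposition. I expect the connectivity lemma of the second paragraph to be the only substantive point; the compatibility of the weight grading used to define a gap with the cooperadic weight grading on $\mathcal{P}^{\ac}$ is routine, since a weight-$p$ element of $\mathcal{P}^{\ac} \subset \mathcal{T}^c(s\mathcal{X})$ sits in arity $p(n-1)+1$, precisely the arity supporting $\mathcal{E}^{(p)}$.
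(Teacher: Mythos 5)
Your proof is correct, and it is essentially the argument behind the statement: the paper itself gives no proof here but defers to \cite{MaRe1}, where the point is exactly that Koszulness identifies the minimal-model generators with the desuspended Koszul dual cooperad, whose nonzero weights form an initial segment of $\mathbb{N}$ because its arity-wise linear dual is a quadratic (hence weight-one-generated) operad. Your connectivity lemma and the weight/arity bookkeeping are both handled correctly (the dualization step tacitly uses the arity-wise finite-dimensionality that is a standing assumption throughout the paper).
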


\section{The mock partially associative operad is not Koszul}\label{sec:nonKoszul}

Let us fix  $n\geq 2$. In this section, we study the operad $\ptildeAss^n_0$ of mock partially associative $n$-ary algebras;
it is generated by one operation $\mu$ of arity $n$ and of degree $0$ satisfying one single relation
\[
\sum_{i=1}^n 
\mu\circ_i\mu =0.
\]
In \cite{MaRe1}, the weak Ginzburg--Kapranov criterion was used to establish that the operads $\ptildeAss^n_0$
are not Koszul for $n \le 7$, and it was conjectured that they are not Koszul for all $n\ge 2$. In this section we prove this conjecture:

\begin{theorem}\label{th:nonKoszul}
The operad $\ptildeAss^n_0$ is not Koszul for an arbitrary $n \geq 2$.
\end{theorem}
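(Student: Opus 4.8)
The plan is to derive the non-Koszulness of $\ptildeAss^n_0$ from the gap criterion (Proposition \ref{prop:Gap}), which means I must pin down the generators of the minimal model precisely enough to exhibit a finite gap. The entry point is that the Koszul dual cooperad $\mathcal{C}=(\tAss^n_1)^c$ is concentrated in weights $0$, $1$ and $2$, with one-dimensional components of arities $1$, $n$ and $2n-1$; this is exactly what its Poincar\'e series $t-t^n+t^{2n-1}$ records. Consequently the cobar complex is completely explicit: $\Omega(\mathcal{C})=\mathcal{T}(\nu,\omega)$ is the free operad on a generator $\nu$ of arity $n$ and degree $0$ (from $s^{-1}\mathcal{C}^{(1)}$) and a generator $\omega$ of arity $2n-1$ and degree $1$ (from $s^{-1}\mathcal{C}^{(2)}$), with differential $\partial\nu=0$ and
\[
\partial\omega=\sum_{i=1}^n\nu\circ_i\nu .
\]
In homological degree $0$ one reads off $H_0(\Omega(\mathcal{C}))=\mathcal{T}(\nu)/(\sum_i\nu\circ_i\nu)=\ptildeAss^n_0$, and by the standard Koszulness criterion the operad $\ptildeAss^n_0$ is Koszul if and only if $H_{>0}(\Omega(\mathcal{C}))=0$.

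Next I would transfer this to the minimal model. Over a characteristic-zero field with finite-dimensional components, the higher generators of the minimal model are detected by the positive-degree homology of the cobar complex: for $p\ge 3$ a nonzero $\mathcal{E}^{(p)}$ is exactly what records a nonzero contribution of $H_{>0}(\Omega(\mathcal{C}))$ in weight $p$, whereas $\mathcal{E}^{(1)}$ and $\mathcal{E}^{(2)}$ are the pieces coming from $\nu$ and $\omega$. In particular $\mathcal{E}^{(2)}\ne 0$ always, since the presentation has a nontrivial relation, so this supplies the left-hand endpoint of a gap for free. It therefore suffices to find a band of weights $3\le p\le q+d-1$ with $\mathcal{E}^{(p)}=0$ and a single weight $q+d$ with $\mathcal{E}^{(q+d)}\ne 0$; the gap criterion then yields the theorem, with the precise values of $q$ and $d$ dictated by the computation.

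The heart of the argument — and the step I expect to be the main obstacle — is establishing the vanishing $\mathcal{E}^{(p)}=0$ throughout the band, that is, the acyclicity of $\Omega(\mathcal{C})$ in positive degree across a range of weights. Crucially, this cannot be extracted from Euler characteristics: the alternating sums are controlled by the compositional inverse of $t-t^n+t^{2n-1}$, which (as Section \ref{sec:inverse} demonstrates for $n=8$) need exhibit no negativity at all, so the positivity criterion is powerless and the vanishing has to be proved at the level of chains. This is where one needs the explicit combinatorial formula: for the relevant cycle $z$ in the cobar complex one writes down a primitive $w$, a signed sum over trees built from $\nu$ and $\omega$ with carefully chosen coefficients, and checks that $\partial w=z$. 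The verification reduces to a massive cancellation among the terms produced by inserting $\partial\omega=\sum_i\nu\circ_i\nu$ in every position, and arranging these cancellations to work uniformly in $n$ is the genuine difficulty; this is the ``rather surprising'' identity flagged in the introduction.

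Finally, to keep the gap finite I would exhibit, at the first weight beyond the band, a positive-degree cycle that is \emph{not} a boundary, so that $\mathcal{E}^{(q+d)}\ne 0$. With both endpoints nonzero and the intermediate weights annihilated by the boundary formula, Proposition \ref{prop:Gap} applies and shows that $\ptildeAss^n_0$ is not Koszul for every $n\ge 2$. Most of the remaining work is bookkeeping — the signs introduced by the degree-$1$ generator $\omega$ and the combinatorics of insertion positions in trees — but the conceptual obstruction is concentrated entirely in the construction and verification of the primitive $w$.
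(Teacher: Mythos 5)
You have the right ingredients on the table --- the explicit cobar complex $\Omega\big((\tAss^n_{1})^c\big)$ on a generator $\mu$ of arity $n$, degree $0$ and a generator $\xi$ of arity $2n-1$, degree $1$ with $\partial\xi=\sum_i\mu\circ_i\mu$, the equivalence ``Koszul iff $H_{>0}=0$ for this complex'', and the need for a hand-crafted primitive $\partial w=z$ --- but you assemble them into an argument with a genuine hole, and you walk past the short proof you already wrote down. Once Koszulness forces $\Omega\big((\tAss^n_{1})^c\big)\simeq\ptildeAss^n_0$, exhibiting a \emph{single} positive-degree cycle that is not a boundary finishes the proof; no gap criterion, and in particular no vanishing of $\mathcal{E}^{(p)}$ in any band of weights, is needed. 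That is what the paper does: the combinatorial formula gives $\partial\nu=n!\,\mn$, where $\mn$ is the right comb on $n+1$ copies of $\mu$; since $\mu\circ_n\mn=\mn\circ_{n^2}\mu$, the degree-$1$ element $c_n=\mu\circ_n\nu-\nu\circ_{n^2}\mu$ is a cycle, and an arity obstruction (the ``whistle-blower'' monomial, which could only arise by contracting an edge of its tree into a vertex of arity $3n-2$, an arity in which the cobar complex has no generator) shows $c_n$ is not a boundary.

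The two concrete defects in your plan are these. First, you assign the primitive formula the job of proving $\mathcal{E}^{(p)}=0$ across a band of weights $p\ge 3$, i.e.\ acyclicity of the cobar complex there. It cannot do that: writing one particular element as a boundary never shows that \emph{every} positive-degree cycle of a given weight bounds, and the vanishing you need is not established for general $n$ --- the paper explicitly remarks that it does not know the smallest weight carrying positive-degree homology, and its alternative gap-criterion argument is carried out only for $n=8$, where $\mathcal{E}^{(3)}=0$ is obtained from a numerical Gr\"obner-basis computation together with a degree-concentration lemma, not from the boundary formula. In the actual proof the identity $\partial\nu=n!\,\mn$ serves the opposite purpose: it \emph{builds} the cycle $c_n$ (and shows the degree-$0$ operation $\mn$ vanishes in $\ptildeAss^n_0$); it kills nothing in positive degree. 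Second, the step that actually carries the theorem --- producing a positive-degree cycle and proving it is not a boundary --- is left entirely unconstructed in your proposal, yet it is independent of, and sufficient without, the gap machinery you wrap around it.
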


The proof goes as follows. From~\cite[Prop.~14]{MaRe1}, it follows that the Koszul dual cooperad of  
$\ptildeAss^n_0$ is the cooperad $(\tAss^n_{1})^c$, whose coalgebras are mock totally coassociative coalgebras (with
one operation of arity $n$ and degree $1$). From~\cite[Lemma~19]{MaRe1}, it follows that the only nonzero 
components of that latter cooperad are those of arities $1$, $n$ and $2n-1$. 

Assume that the operad~$\ptildeAss^n_0$ is Koszul, so that it coincides with the homology of the cobar construction  
$\Omega((\tAss^n_{1})^c)$. Explicitly, the cobar construction is freely generated by an operation $\mu$ of arity $n$ and degree $0$,
and an operation $\xi$ of arity $2n-1$ and degree $1$; its differential $\pa$ is given by
\[
\pa(\mu) := 0,\quad  \pa(\xi) := 
\sum_{i=1}^n 
\mu\circ_i\mu.
\] 
As usual, we will represent elements of the free operad as linear combinations of planar rooted
trees. In homological degree $0$ we have trees with $n$-ary vertices, and in degree $1$
trees with $n$-ary vertices and exactly one vertex of arity $2n-1$, which we call the \emph{fat} vertex.
The central role in the proof is played by the element $\mn$ obtained by iterated composition of $n+1$ copies $\mu$,
where each composition is at the last slot. For example, a pictorial presentation 
of $\mu^{(4)}$ is
\begin{center}
{
\unitlength=.5pt
\begin{picture}(470.00,105.00)(-160.00,120.00)
\put(110.00,140.00){\makebox(0.00,0.00){$\tecka$}}
\put(90.00,160.00){\makebox(0.00,0.00){$\tecka$}}
\put(70.00,180.00){\makebox(0.00,0.00){$\tecka$}}
\put(50.00,200.00){\makebox(0.00,0.00){$\tecka$}}
\put(110.00,140.00){\line(1,-1){20.00}}
\put(110.00,140.00){\line(0,-1){20.00}}
\put(90.00,120.00){\line(1,1){20.00}}
\put(90.00,160.00){\line(-1,-1){20.00}}
\put(90.00,140.00){\line(0,1){20.00}}
\put(70.00,180.00){\line(0,-1){20.00}}
\put(70.00,180.00){\line(-1,-1){20.00}}
\put(50.00,200.00){\line(0,-1){20.00}}
\put(50.00,200.00){\line(-1,-1){20.00}}
\put(50.00,220.00){\line(0,-1){20.00}}
\put(50.00,200.00){\line(1,-1){60.00}}
\put(140.00,140.00){{.}}
\end{picture}}
\end{center}

Computing Gr\"obner bases \cite{BrDo} of the defining relations for operads $\ptildeAss^n_0$ for small $n$, one notices that the operation $\mn$ always appears as a Gr\"obner basis element, and so it is natural to conjecture that the operation $\mn$ vanishes in any $\ptildeAss^n_0$-algebra. We establish that result below. The operation $\mn$ has weight $n+1$ and arity $n^2$, and in fact, it is not completely surprising that some unexpected vanishing result can be proved for that weight / arity. Indeed, according to \cite[Prop.~10.2.2.4]{BrDo}, the number of distinct consequences 
of weight $w$ of one quadratic relation involving one $n$-ary operation is equal to $\binom{nw-1}{w-2}$, and so for $w=n+1$ that number is equal to 
 \[
\binom{n^2+n-1}{n-1}=\frac{(n^2+n-1)!}{(n^2)!(n-1)!}=\frac{1}{n^2}\frac{(n^2+n)!}{(n^2-1)!(n+1)!}=\frac{1}{n^2}\binom{n^2+n}{n+1} ,
 \]
which is the dimension of the whole weight $n+1$ component of the corresponding free operad. Therefore, for a ``generic'' relation it would even be
likely that all tree tensors vanish individually, but since our relation is far from generic, only some partial vanishing is observed. Let us note that, in general, the Gr\"obner basis of the defining relations of the operad $\ptildeAss^n_0$ seems to be far from tractable, and finding dimensions of components of that operad is an open problem; therefore, we only establish that this operad gives an example of limitations that the positivity test has, and not determine whether the Ginzburg--Kapranov functional equation holds.
Let us introduce, only for the purposes of this section, the following:

\noindent 
{\bf Terminology.}
A {\em degree-$0$ tree\/} will mean a planar rooted tree with $n+1$
vertices of arity $n$.  A~{\em degree-$1$ tree\/} will be a planar
rooted tree with $n-1$ vertices of arity $n$ and one fat vertex. (As above, this degree refers to the degree in the cobar complex.)
With a~few obvious exceptions, by a {\em tree} 
we will mean either a degree-$0$ tree or a degree-$1$ tree.
Thus $\mn$ is a~particular example of a degree-$0$ tree.

We are going to describe a rule that divides internal edges of each tree $X$ 
into two disjoint subsets, the set $\reg(X)$ of {\em regular\/} edges
and the set $\sng(X)$ of {\em singular\/} ones. For a degree-$0$ tree~$S$
and its internal edge $e\in \edg(S)$ denote by $S/e$ the degree-$1$ tree
obtained by collapsing $e$ into a~vertex. The crucial property of this
rule will be that
\begin{equation}
\label{eq:2}
\card(\reg(S/e)) =
\begin{cases}
\card(\reg(S)) -1 &\hbox { if $e$ is regular, and}
\\
\card(\reg(S)) &\hbox { if $e$ is singular.}  
\end{cases}
\end{equation}

Given a tree $X$, we ``flatten'' it in such a~way that its  
rightmost input leg is at the same level as its root leg, resulting in a diagram of the form
\[
{
\unitlength=.7pt
\begin{picture}(210.00,80.00)(0.00,0.00)
\thicklines
\put(190.00,10.00){\makebox(0.00,0.00)[b]{$R_s$}}
\put(80.00,10.00){\makebox(0.00,0.00)[b]{$R_2$}}
\put(20.00,10.00){\makebox(0.00,0.00)[b]{$R_1$}}
\put(136,50.00){\makebox(0.00,0.00){$\cdots$}}
\put(190.00,50.00){\line(0,1){30.00}}
\put(150.00,50.00){\line(1,0){40.00}}
\put(20.00,50.00){\line(1,0){100.00}}
\put(20.00,80.00){\line(0,-1){30.00}}
\put(190.00,50.00){\line(-2,-5){20.00}}
\put(210.00,0.00){\line(-2,5){20.00}}
\put(170.00,0.00){\line(1,0){40.00}}
\put(80.00,50.00){\line(-2,-5){20.00}}
\put(100.00,0.00){\line(-2,5){20.00}}
\put(60.00,0.00){\line(1,0){40.00}}
\put(40.00,0.00){\line(-2,5){20.00}}
\put(0.00,0.00){\line(1,0){40.00}}
\put(20.00,50.00){\line(-2,-5){20.00}}
\end{picture}}
\]
where $R_i$'s are, for $1\leq i \leq s$, planar rooted trees.  We call the result the \emph{body} of the tree~$X$. 
The \emph{soul} of a tree $X$ is obtained from its body by removing all the external legs; it is
a diagram of the form  
\begin{equation}
\label{dnes_vecer_Plastici}
{
\unitlength=.7pt
\begin{picture}(210.00,30.00)(0.00,60.00)
\put(0,30){
\thicklines
\put(190.00,10.00){\makebox(0.00,0.00)[b]{$T_s$}}
\put(80.00,10.00){\makebox(0.00,0.00)[b]{$T_2$}}
\put(20.00,10.00){\makebox(0.00,0.00)[b]{$T_1$}}
\put(136,50.00){\makebox(0.00,0.00){$\cdots$}}
\put(150.00,50.00){\line(1,0){40.00}}
\put(20.00,50.00){\line(1,0){100.00}}
\put(190.00,50.00){\line(-2,-5){20.00}}
\put(210.00,0.00){\line(-2,5){20.00}}
\put(170.00,0.00){\line(1,0){40.00}}
\put(80.00,50.00){\line(-2,-5){20.00}}
\put(100.00,0.00){\line(-2,5){20.00}}
\put(60.00,0.00){\line(1,0){40.00}}
\put(40.00,0.00){\line(-2,5){20.00}}
\put(0.00,0.00){\line(1,0){40.00}}
\put(20.00,50.00){\line(-2,-5){20.00}}
}
\end{picture}}
\end{equation}
\vskip 20pt
\noindent 
where $T_i$'s are trees with no external legs. Note that there is a one-to-one correspondence between
the set  $\edg(X)$ of internal edges of $X$ and the set of edges of
its soul. In other words, the soul of $X$ is the subtree of $X$ spanned by
its internal edges drawn as in~(\ref{dnes_vecer_Plastici}). 

We call an internal 
edge of $X$ \emph{singular} if it corresponds to the outgoing edge of a non-fat vertex of the soul of $X$  
with no input edge, i.e.\ when the corresponding edge in the soul looks as
\[
{
\unitlength=.5pt
\begin{picture}(0.00,20.00)(0.00,5.00)
\put(0.00,0.00){\makebox(0.00,0.00){$\tecka$}}
\put(0.00,30.00){\line(0,-1){30.00}}
\end{picture}}
\]
where $\tecka$ has no input edges (we emphasize here that horizontal
edges are not considered as input edges). All remaining internal 
edges of $X$ are called  \emph{regular}.
It is easy to see that this division of edges into regular and singular fulfills~\eqref{eq:2}. 
We believe that Figures~\ref{Jim_Cert} and~\ref{PPU} explain what we
mean; in these figures, non-fat vertices are represented by bullets
$\tecka$ and fat vertices are represented by black
squares~$\tlustatecka$, all the singular edges are 
dotted,  and all the regular ones are thick.

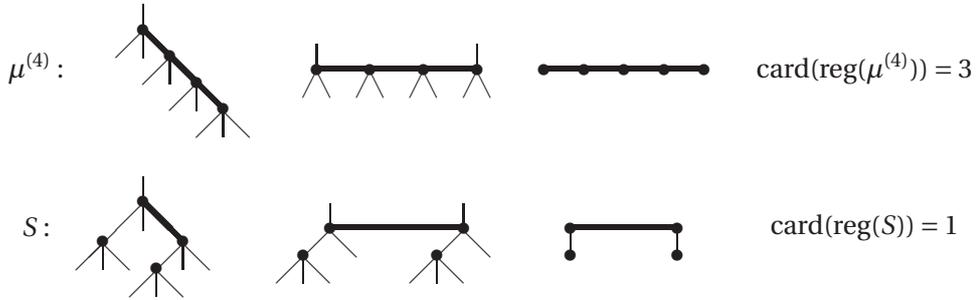
\begin{figure}[!!h]
\begin{center}
{
\unitlength=.5pt
\begin{picture}(470.00,220.00)(120.00,0.00)
\put(450.00,30.00){\makebox(0.00,0.00){$\tecka$}}
\put(450.00,50.00){\makebox(0.00,0.00){$\tecka$}}
\put(370.00,30.00){\makebox(0.00,0.00){$\tecka$}}
\put(370.00,50.00){\makebox(0.00,0.00){$\tecka$}}
\multiput(367.00,46.00)(0,-4){5}{$\cdot$}
\multiput(447.00,46.00)(0,-4){5}{$\cdot$}
{
\thicklines
\multiput(370.00,51)(0,.1){20}{\line(1,0){80.00}}
}
\put(270.00,30.00){\makebox(0.00,0.00){$\tecka$}}
\put(290.00,50.00){\makebox(0.00,0.00){$\tecka$}}
\put(190.00,50.00){\makebox(0.00,0.00){$\tecka$}}
\put(170.00,30.00){\makebox(0.00,0.00){$\tecka$}}
\put(270.00,30.00){\line(0,-1){20.00}}
\put(270.00,30.00){\line(1,-1){20.00}}
\put(270.00,30.00){\line(-1,-1){20.00}}
\put(290.00,50.00){\line(1,-1){20.00}}
\multiput(288.00,50.00)(-4,-4){5}{.}
\put(190.00,50.00){\line(1,-1){20.00}}
\put(170.00,30.00){\line(0,-1){20.00}}
\put(170.00,30.00){\line(1,-1){20.00}}
\multiput(188.00,50.00)(-4,-4){5}{.}
\put(170.00,30.00){\line(-1,-1){20.00}}
\put(290.00,50.00){\line(0,1){20.00}}
{\thicklines
\multiput(190.00,51.00)(0,.1){20}{\line(1,0){100.00}}
}
\put(190.00,70.00){\line(0,-1){20.00}}
\put(60.00,20.00){\makebox(0.00,0.00){$\tecka$}}
\put(80.00,40.00){\makebox(0.00,0.00){$\tecka$}}
\put(20.00,40.00){\makebox(0.00,0.00){$\tecka$}}
\put(50.00,70.00){\makebox(0.00,0.00){$\tecka$}}
\put(60.00,20.00){\line(1,-1){20.00}}
\put(60.00,20.00){\line(0,-1){20.00}}
\put(60.00,20.00){\line(-1,-1){20.00}}
\put(80.00,40.00){\line(1,-1){20.00}}
\put(80.00,40.00){\line(0,-1){20.00}}
\multiput(78.00,40.00)(-4,-4){5}{.}
\put(20.00,40.00){\line(0,-1){20.00}}
\put(20.00,40.00){\line(1,-1){20.00}}
\put(20.00,40.00){\line(-1,-1){20.00}}
{
\thicklines
\multiput(50.00,70.00)(.1,.1){20}{\line(1,-1){30.00}}
}
\put(50.00,70.00){\line(0,-1){20.00}}
\multiput(48.00,70.00)(-4.1,-4.1){8}{.}
\put(50.00,90.00){\line(0,-1){20.00}}
\put(440.00,170.00){\makebox(0.00,0.00){$\tecka$}}
\put(410.00,170.00){\makebox(0.00,0.00){$\tecka$}}
\put(380.00,170.00){\makebox(0.00,0.00){$\tecka$}}
\put(350.00,170.00){\makebox(0.00,0.00){$\tecka$}}
\put(300.00,170.00){\makebox(0.00,0.00){$\tecka$}}
\put(260.00,170.00){\makebox(0.00,0.00){$\tecka$}}
\put(220.00,170.00){\makebox(0.00,0.00){$\tecka$}}
\put(180.00,170.00){\makebox(0.00,0.00){$\tecka$}}
{\thicklines
\multiput(350.00,171.00)(0,.1){20}{\line(1,0){90.00}}
}
\put(260.00,170.00){\line(1,-2){10.00}}
\put(300.00,190.00){\line(0,-1){20.00}}
\put(180.00,190.00){\line(0,-1){20.00}}
\put(300.00,170.00){\line(-1,-2){10.00}}
\put(300.00,170.00){\line(1,-2){10.00}}
\put(260.00,170.00){\line(1,0){40.00}}
\put(250.00,150.00){\line(1,2){10.00}}
\put(220.00,170.00){\line(1,0){40.00}}
\put(220.00,170.00){\line(1,-2){10.00}}
\put(220.00,170.00){\line(-1,-2){10.00}}
\put(190.00,150.00){\line(-1,2){10.00}}
\put(180.00,170.00){\line(-1,-2){10.00}}
{
\thicklines
\multiput(180.00,171.00)(0,.1){20}{\line(1,0){120.00}}
}
\put(110.00,140.00){\makebox(0.00,0.00){$\tecka$}}
\put(90.00,160.00){\makebox(0.00,0.00){$\tecka$}}
\put(70.00,180.00){\makebox(0.00,0.00){$\tecka$}}
\put(50.00,200.00){\makebox(0.00,0.00){$\tecka$}}
\put(590.00,172.00){\makebox(0.00,0.00){$\card(\reg(\mu^{(4)}))=3$}}
\put(-30.00,172.00){\makebox(0.00,0.00){$\mu^{(4)}:$}}
\put(590.00,52.00){\makebox(0.00,0.00){$\card(\reg(S))=1$}}
\put(-30,52.00){\makebox(0.00,0.00){$S:$}}
\put(110.00,140.00){\line(1,-1){20.00}}
\put(110.00,140.00){\line(0,-1){20.00}}
\put(90.00,120.00){\line(1,1){20.00}}
\put(90.00,160.00){\line(-1,-1){20.00}}
\put(90.00,140.00){\line(0,1){20.00}}
\put(70.00,180.00){\line(0,-1){20.00}}
\put(70.00,180.00){\line(-1,-1){20.00}}
\put(50.00,200.00){\line(0,-1){20.00}}
\put(50.00,200.00){\line(-1,-1){20.00}}
\put(50.00,220.00){\line(0,-1){20.00}}
{
\thicklines
\multiput(50.00,200.00)(.1,.1){20}{\line(1,-1){60.00}}
}
\end{picture}}
\end{center}
\caption{\label{Jim_Cert}
Some degree-$0$ trees for $n=3$ together with their bodies and souls.}
\end{figure}
\begin{figure}[!!!h]
\begin{center}
{
\unitlength=.50pt
\begin{picture}(100.00,480.00)(230.00,-380.00)
\put(60.00,20.00){\makebox(0.00,0.00){$\tecka$}}
\put(20.00,30.00){\makebox(0.00,0.00){$\tecka$}}
\put(60.00,20.00){\line(1,-1){20.00}}
\put(60.00,20.00){\line(-1,-1){20.00}}
\put(60.00,30.00){\line(0,-1){30.00}}
\put(20.00,30.00){\line(1,-1){20.00}}
\put(20.00,10.00){\line(0,1){20.00}}
\put(20.00,30.00){\line(-1,-1){20.00}}
\put(20.00,100.00){\makebox(0.00,0.00){}}
\put(60.00,70.00){\makebox(0.00,0.00){$\tlustatecka$}}
\put(60.00,70.00){\line(1,-2){20.00}}
\put(60.00,70.00){\line(-1,-2){20.00}}
\put(60.00,70.00){\line(1,-1){40.00}}
\multiput(58.00,70.00)(0,-6){10}{.}
\multiput(58.00,70.00)(-4,-4){10}{.}
\put(60.00,110.00){\line(0,-1){40.00}}

\put(150,0){
\put(0.00,80.00){\makebox(0.00,0.00){}}
\put(80.00,20.00){\makebox(0.00,0.00){$\tecka$}}
\put(30.00,20.00){\makebox(0.00,0.00){$\tecka$}}
\put(70.00,60.00){\makebox(0.00,0.00){$\tlustatecka$}}
\put(80.00,20.00){\line(1,-1){20.00}}
\put(80.00,20.00){\line(0,-1){20.00}}
\put(80.00,20.00){\line(-1,-1){20.00}}
\put(30.00,20.00){\line(1,-1){20.00}}
\put(30.00,0.00){\line(0,1){20.00}}
\put(30.00,20.00){\line(-1,-1){20.00}}
\put(70.00,60.00){\line(1,-1){40.00}}
\multiput(68.00,60.00)(1.5,-6){8}{.}
\put(70.00,60.00){\line(-1,-4){10.00}}
\multiput(68.00,60.00)(-4,-4){10}{.}
\put(70.00,60.00){\line(1,1){40.00}}
\put(30.00,100.00){\line(1,-1){40.00}}
}

\put(300,0){
\put(0.00,80.00){\makebox(0.00,0.00){}}
\put(90.00,20.00){\makebox(0.00,0.00){$\tecka$}}
\put(50.00,20.00){\makebox(0.00,0.00){$\tecka$}}
\put(70.00,60.00){\makebox(0.00,0.00){$\tlustatecka$}}
\put(220.00,50.00){\makebox(0.00,0.00){$\card(\reg(T_1)) = 0$}}
\put(-330.00,50.00){\makebox(0.00,0.00){$T_1:$}}
\multiput(68.00,60.00)(3,-6){7}{.}
\multiput(68.00,60.00)(-3,-6){7}{.}
}

\put(0,-130){
\put(90.00,30.00){\makebox(0.00,0.00){$\tlustatecka$}}
\put(90.00,30.00){\line(1,-2){10.00}}
\put(90.00,30.00){\line(0,-1){20.00}}
\put(80.00,10.00){\line(1,2){10.00}}
\put(90.00,30.00){\line(-1,-1){20.00}}
\put(20.00,20.00){\makebox(0.00,0.00){$\tecka$}}
\put(20.00,20.00){\line(1,-1){20.00}}
\put(20.00,0.00){\line(0,1){20.00}}
\put(20.00,20.00){\line(-1,-1){20.00}}
\put(20.00,90.00){\makebox(0.00,0.00){\relax}}
\put(60.00,60.00){\makebox(0.00,0.00){$\tecka$}}
\put(60.00,60.00){\line(0,-1){40.00}}
\multiput(58.00,60.00)(-4,-4){10}{.}
\put(60.00,100.00){\line(0,-1){40.00}}
\put(90.00,30.00){\line(1,-1){20.00}}
\thicklines
\multiput(60.00,61.00)(.1,.1){18}{\line(1,-1){30.00}}
}

\put(150,-130){
\put(0.00,60.00){\makebox(0.00,0.00){\relax}}
\put(20.00,20.00){\makebox(0.00,0.00){$\tecka$}}
\put(120.00,40.00){\line(1,-3){10.00}}
\put(120.00,40.00){\line(-1,-3){10.00}}
\put(40.00,40.00){\line(1,-1){20.00}}
\put(20.00,20.00){\line(1,-1){20.00}}
\put(20.00,20.00){\line(0,-1){20.00}}
\put(20.00,20.00){\line(-1,-1){20.00}}
\multiput(38.00,40.00)(-3.9,-3.9){5}{.}
\put(120.00,41.50){\makebox(0.00,0.00){$\tlustatecka$}}
\put(40.00,40.00){\makebox(0.00,0.00){$\tecka$}}
\put(120.00,80.00){\line(0,-1){40.00}}
\put(120.00,40.00){\line(1,-1){30.00}}
\put(120.00,40.00){\line(-1,-1){30.00}}
\put(40.00,80.00){\line(0,-1){40.00}}
\thicklines
\multiput(40.00,41.00)(0,.1){20}{\line(1,0){80.00}}
}

\put(300,-130){
\put(0.00,60.00){\makebox(0.00,0.00){\relax}}
\put(40.00,0.00){\makebox(0.00,0.00){$\tecka$}}
\multiput(38.00,40.00)(0,-6){7}{.}
\put(120.00,41.50){\makebox(0.00,0.00){$\tlustatecka$}}
\put(40.00,40.00){\makebox(0.00,0.00){$\tecka$}}
\thicklines
\multiput(40.00,41.00)(0,.1){20}{\line(1,0){80.00}}
\put(220.00,40.00){\makebox(0.00,0.00){$\card(\reg(T_2)) = 1$}}
\put(-330.00,40.00){\makebox(0.00,0.00){$T_2:$}}
}

\put(0,-260){
\put(0.00,110.00){\makebox(0.00,0.00){\relax}}
\put(60.00,20.00){\makebox(0.00,0.00){$\tecka$}}
\put(80.00,40.00){\makebox(0.00,0.00){$\tecka$}}
\put(40.00,81.50){\makebox(0.00,0.00){$\tlustatecka$}}
\put(60.00,20.00){\line(1,-1){20.00}}
\put(60.00,20.00){\line(0,-1){20.00}}
\put(60.00,20.00){\line(-1,-1){20.00}}
\put(80.00,40.00){\line(1,-1){20.00}}
\put(80.00,40.00){\line(0,-1){20.00}}
\multiput(78.00,40.00)(-3.9,-3.9){5}{.}
\put(40.00,80.00){\line(1,-2){20.00}}
\put(40.00,80.00){\line(-1,-2){20.00}}
\put(40.00,80.00){\line(0,-1){40.00}}
\put(40.00,80.00){\line(-1,-1){40.00}}
\put(40.00,120.00){\line(0,-1){40.00}}
\thicklines
\multiput(40.00,81.00)(.1,.1){17}{\line(1,-1){40.00}}
}

\put(150,-260){
\put(90.00,20.00){\makebox(0.00,0.00){$\tecka$}}
\put(110.00,40.00){\makebox(0.00,0.00){$\tecka$}}
\put(30.00,41.50){\makebox(0.00,0.00){$\tlustatecka$}}
\put(90.00,20.00){\line(1,-1){20.00}}
\put(90.00,20.00){\line(0,-1){20.00}}
\put(90.00,20.00){\line(-1,-1){20.00}}
\put(110.00,80.00){\line(0,-1){40.00}}
\put(110.00,40.00){\line(1,-1){20.00}}
\multiput(108.00,40.00)(-3.9,-3.9){5}{.}
\put(30.00,40.00){\line(1,-1){30.00}}
\put(30.00,40.00){\line(1,-3){10.00}}
\put(30.00,40.00){\line(-1,-3){10.00}}
\put(30.00,40.00){\line(-1,-1){30.00}}
\put(30.00,80.00){\line(0,-1){40.00}}
\thicklines
\multiput(30.00,41.00)(0,.1){20}{\line(1,0){80.00}}
}

\put(300,-260){
\put(0.00,60.00){\makebox(0.00,0.00){\relax}}
\put(120.00,0.00){\makebox(0.00,0.00){$\tecka$}}
\multiput(118.00,40.00)(0,-6){7}{.}
\put(120.00,40.00){\makebox(0.00,0.00){$\tecka$}}
\put(40.00,41.50){\makebox(0.00,0.00){$\tlustatecka$}}
\thicklines
\multiput(40.00,41.00)(0,.1){20}{\line(1,0){80.00}}
\put(220.00,40.00){\makebox(0.00,0.00){$\card(\reg(T_3)) = 1$}}
\put(-330.00,40.00){\makebox(0.00,0.00){$T_3:$}}
}

\put(0,-390){
\put(60.00,20.00){\makebox(0.00,0.00){$\tecka$}}
\put(20.00,60.00){\makebox(0.00,0.00){$\tecka$}}
\put(40.00,40.00){\makebox(0.00,0.00){$\tlustatecka$}}
\put(60.00,20.00){\line(1,-1){20.00}}
\put(60.00,20.00){\line(0,-1){20.00}}
\put(60.00,20.00){\line(-1,-1){20.00}}
\put(40.00,40.00){\line(1,-2){10.00}}
\put(40.00,20.00){\line(0,1){20.00}}
\put(40.00,40.00){\line(-1,-2){10.00}}
\put(40.00,40.00){\line(-1,-1){20.00}}
\put(40.00,40.00){\line(0,1){0.00}}
\put(20.00,60.00){\line(0,-1){20.00}}
\put(20.00,60.00){\line(-1,-1){20.00}}
\put(20.00,90.00){\line(0,-1){30.00}}
\thicklines
\multiput(20.00,60.00)(.1,.1){17}{\line(1,-1){20.00}}
\multiput(40.00,40.00)(.1,.1){17}{\line(1,-1){20.00}}
}

\put(150,-390){
\put(80.00,30.00){\makebox(0.00,0.00){$\tlustatecka$}}
\put(140.00,30.00){\makebox(0.00,0.00){$\tecka$}}
\put(20.00,30.00){\makebox(0.00,0.00){$\tecka$}}
\put(80.00,30.00){\line(1,-1){30.00}}
\put(90.00,0.00){\line(-1,3){10.00}}
\put(80.00,30.00){\line(-1,-3){10.00}}
\put(70.00,20.00){\line(-1,-1){20.00}}
\put(80.00,30.00){\line(-1,-1){20.00}}
\put(140.00,30.00){\line(0,1){40.00}}
\put(140.00,30.00){\line(1,-1){20.00}}
\put(140.00,30.00){\line(-1,-1){20.00}}
\put(20.00,30.00){\line(1,-1){20.00}}
\put(20.00,30.00){\line(-1,-1){20.00}}
\put(20.00,30.00){\line(0,1){0.00}}
\put(20.00,70.00){\line(0,-1){40.00}}
\thicklines
\multiput(20.00,31.00)(.1,.1){17}{\line(1,0){120.00}}
}

\put(300,-390){
\put(80.00,31.50){\makebox(0.00,0.00){$\tlustatecka$}}
\put(120.00,30.00){\makebox(0.00,0.00){$\tecka$}}
\put(40.00,30.00){\makebox(0.00,0.00){$\tecka$}}
\thicklines
\multiput(40.00,31.00)(0,.1){20}{\line(1,0){80.00}}
\put(220.00,30.00){\makebox(0.00,0.00){$\card(\reg(T_4)) = 2$}}
\put(-330.00,30.00){\makebox(0.00,0.00){$T_4:$}}
}
\end{picture}}
\end{center}
\caption{
\label{PPU}
Some degree-$1$ trees for $n=3$ together with their bodies and souls. }
\end{figure}

We denote by $\edg(X)$ the set of internal edges of a tree $X$ and $e(X)$
the cardinality of this set. Notice~that
\[
e(X) =
\begin{cases}
n& \hbox { if $X$ is a degree-$0$ tree and}
\\
n-1& \hbox { if $X$ is a degree-$1$ tree.}  
\end{cases}
\]

The core of our proof of Theorem \ref{th:nonKoszul} 
is the following combinatorial lemma.

\begin{lemma}
\label{lemma}
For a degree-$1$ tree $T$ put $g = g(T) := \card(\reg(T))$ and define
\begin{equation}
\label{Jaruska_dalsi_pohovor}
\epsilon_T := (-1)^{g+n+1} g! (n-g-1)!
\end{equation}
Then 
\begin{equation}
\label{eq:4}
\pa\big( \sum_T \epsilon_T T\big) = n!\ \mu^{(n+1)}, 
\end{equation}
with the sum in the left hand side taken over all   degree-$1$ trees.
\end{lemma}

\begin{proof}
The scheme of the proof will be clearer if we
rewrite~(\ref{eq:4}) into
\begin{equation}
\label{eq:4bis}
\pa\big( \sum_T \epsilon_T T\big) = n!\big(B_1 - (-1)^n B_0\big),
\end{equation}
in which $B_1$ (resp.~$B_0$) is the sum of all degree-$0$ trees with $\sng(S)
= \emptyset$ (resp.~with $\reg(S) = \emptyset$). 
Since, by a direct inspection, $\mn$ is the
only degree-$0$ tree with no singular edge, while each degree-$0$ tree has at least
one regular edge,~(\ref{eq:4}) will be an
immediate consequence of~(\ref{eq:4bis}).

Let us prove~(\ref{eq:4bis}). 
For a degree-$0$ tree $S$ let $\pa\big( \sum_T \epsilon_T T\big)[S]$ be
the coefficient of  $S$ in
$\pa\big( \sum_T \epsilon_T T\big)$. It is clear from the definition of
the differential that
\begin{equation}
\label{eq:3}
\pa\big( \sum_T \epsilon_T T\big)[S]
=
\sum_{e \in \edg(S)} \epsilon_{S/e} =  \sum_{e \in \reg(S)} \epsilon_{S/e}
+\sum_{e \in \sng(S)} \epsilon_{S/e}.
\end{equation}
Denote $k: = \card(\reg(S))$. By~(\ref{eq:2}) one has
\[
g(S/e) =
\begin{cases}
k -1 &\hbox { if $e$ is regular, and}
\\
k &\hbox { if $e$ is singular,}  
\end{cases}
\]
therefore
\[
\epsilon_{S/e} =
\begin{cases}
 (-1)^{k+n} (k-1)!(n-k)! &\hbox { if $e$ is regular, and}
\\
(-1)^{k+n+1} k!(n-k-1)!  &\hbox { if $e$ is singular,}  
\end{cases}
\]
Notice finally that, since 
\[
\card(\reg(S)) + \card(\sng(S)) = \card(\edg(S)) = n,
\]
one has $\card(\sng(S)) = n-k$. 
Using the above calculations we verify that, for $k \not= 0,n$, 
\begin{align*}
\pa\big( \sum_T \epsilon_T T\big)[S]&= \sum_{e \in \reg(S)} (-1)^{k+n} (k-1)!(n-k)! 
+\sum_{e \in \sng(S)} (-1)^{k+n+1} k!(n-k-1)! 
\\&= k\cdot (-1)^{k+n} (k-1)!(n-k)!  +  (n-k)\cdot
(-1)^{k+n+1}k!(n-k-1)! 
=0.
\end{align*}

If $\sng(S) = \emptyset$ then $k=n$ and the second sum in the right hand
side of~(\ref{eq:3}) vanishes while the first one equals
\[
\sum_{e\in \reg(S)} (n-1)!0! = n\cdot  (n-1)!0! = n!.
\]
The case $\reg(S) = \emptyset$ is similar.
\end{proof}

\begin{example}
For $n=2$ one has five degree-$1$ trees:
\[
{
\unitlength=1.000000pt
\begin{picture}(290.00,30.00)(20.00,0.00)
\put(60,0){
\put(290.00,15.00){\makebox(0.00,0.00){$.$}}
\put(240.00,20.00){\makebox(0.00,0.00){$T_5:=$}}
\put(280.00,10.00){\makebox(0.00,0.00){$\tlustatecka$}}
\put(270.00,20.00){\makebox(0.00,0.00){$\tecka$}}
\put(280.00,10.00){\line(0,-1){10.00}}
\put(280.00,10.00){\line(-1,-1){10.00}}
{\thicklines
\multiput(270.00,20.00)(.1,.1){8}{\line(1,-1){10.00}}
}
\put(280.00,10.00){\line(1,-1){10.00}}
\put(270.00,20.00){\line(-1,-1){10.00}}
\put(270.00,30.00){\line(0,-1){10.00}}
}

\put(25,0){
\put(180.00,20.00){\makebox(0.00,0.00){$T_4:=$}}
\put(225.00,15.00){\makebox(0.00,0.00){,}}
\put(235.00,20.00){\makebox(0.00,0.00)[l]{and}}
\put(200.00,10.00){\line(1,-1){10.00}}
\put(200.00,10.00){\line(0,-1){10.00}}
\put(200.00,10.00){\makebox(0.00,0.00){$\tlustatecka$}}
\put(210.00,20.00){\makebox(0.00,0.00){$\tecka$}}
\put(210.00,30.00){\line(0,-1){10.00}}
\put(200.00,10.00){\line(-1,-1){10.00}}
\put(210.00,20.00){\line(1,-1){10.00}}
{\thicklines
\multiput(210.00,20.00)(-.1,.1){8}{\line(-1,-1){10.00}}
}
}

\put(10,0){
\put(120.00,20.00){\makebox(0.00,0.00){$T_3:=$}}
\put(170.00,15.00){\makebox(0.00,0.00){$,$}}
\put(150.00,20.00){\makebox(0.00,0.00){$\tlustatecka$}}
\put(160.00,10.00){\makebox(0.00,0.00){$\tecka$}}
\put(150.00,30.00){\line(0,-1){10.00}}
\put(150.00,20.00){\line(-1,-1){10.00}}
\put(150.00,20.00){\line(0,-1){10.00}}
\put(160.00,10.00){\line(-1,-1){10.00}}
{\thicklines
\multiput(150.00,20.00)(.1,.1){8}{\line(1,-1){10.00}}
}
\put(160.00,10.00){\line(1,-1){10.00}}
}

\multiput(88.0,19.50)(-1.8,-1.8){5}{.}
\put(80.00,10.00){\line(-1,-1){10.00}}
\put(60.00,20.00){\makebox(0.00,0.00){$T_2:=$}}
\put(110.00,15.00){\makebox(0.00,0.00){,}}
\put(90.00,20.00){\makebox(0.00,0.00){$\tlustatecka$}}
\put(80.00,10.00){\makebox(0.00,0.00){$\tecka$}}
\put(90.00,30.00){\line(0,-1){10.00}}
\put(90.00,20.00){\line(1,-1){10.00}}
\put(90.00,20.00){\line(0,-1){10.00}}
\put(80.00,10.00){\line(1,-1){10.00}}

\put(-10,0)
{
\put(0.00,20.00){\makebox(0.00,0.00){$T_1:=$}}
\put(50.00,15.00){\makebox(0.00,0.00){,}}
\put(30.00,20.00){\makebox(0.00,0.00){$\tlustatecka$}}
\put(30.00,5.00){\makebox(0.00,0.00){$\tecka$}}
\put(30.00,30.00){\line(0,-1){10.00}}
\put(30.00,5.00){\line(1,-1){10.00}}
\put(30.00,5.00){\line(-1,-1){10.00}}
\put(30.00,20.00){\line(1,-1){10.00}}
\multiput(28.50,18)(0,-2.8){5}{$\cdot$}
\put(30.00,20.00){\line(-1,-1){10.00}}
}
\end{picture}}
\]
One sees that
$\card(\reg(T_1)) = \card(\reg(T_2)) = 0$ and $\card(\reg(T_3)) =\card(\reg(T_4))  = \card(\reg(T_5)) =
1$ so, by~(\ref{Jaruska_dalsi_pohovor}), $\epsilon_{T_1} = \epsilon_{T_2}
= -1$ and $\epsilon_{T_3} = \epsilon_{T_4} = \epsilon_{T_5}
= 1$. Equation~(\ref{eq:4}) in this case reads
\[
\pa(-T_1 -T_2 + T_3 + T_4 + T_5) = 2\mu^{(3)}. 
\]
\end{example}

\begin{example}
The trees $T_1$, $T_2$ and $T_3$ in Figure~\ref{PPU} are all degree-$1$ trees $T$ 
such that $\pa(T)[S]\not= 0$ for the degree-$0$ tree $S$ in
Figure~\ref{Jim_Cert}. The tree $T_2$ appears in the left hand side
of~(\ref{eq:4})  with
coefficient $2$, the trees $T_1$ and $T_3$ with coefficients $-1$, so
indeed  $\pa ( \sum_T \epsilon_T T\big)[S] = 0$.
\end{example}

\begin{proof}[Proof of Theorem~\ref{th:nonKoszul}]
Notice first that all coefficients $\epsilon_T$
in~(\ref{Jaruska_dalsi_pohovor}) are non-zero, and denote
\[
\nu := \sum_{T} \epsilon_T T \ \hbox {(sum over all degree-$1$ trees)}.
\] 
Let us show that the degree $1$ element $c_n := \mu \circ_n \nu - \nu \circ_{n^2} \mu$ represents a nontrivial homology class of the  cobar complex $\Omega((\tAss^n_{1})^c)$. Using~\eqref{eq:4}, we verify that
\[
\pa(c_n) =\mu \circ_n  \pa (\nu) - \pa(\nu )\circ_{n^2} \mu =
n!\big(\mu \circ_n \mn - \mn \circ_{n^2} \mu\big) = 0,
\] 
so $c_n$ is indeed a cycle. 
The crucial role in proving that $c_n$ is non-homologous to zero
is played by the ``whistle-blower'' 
\[
W_n:=\mu \circ_n \big[(\cdots ((\xi \circ_{n-1}\mu)\circ_{n-2}\mu) \cdots)
\circ_1 \mu\big].
\]
For example, the whistle-blower $W_3$ is represented by the degree-$1$ tree
\[
{
\unitlength=.5pt
\begin{picture}(100.00,90.00)(0.00,0.00)
\put(105.00,45.00){\makebox(0.00,0.00){$.$}}
\put(60.00,20.00){\makebox(0.00,0.00){$\tecka$}}
\put(40.00,20.00){\makebox(0.00,0.00){$\tecka$}}
\put(50.00,70.00){\makebox(0.00,0.00){$\tecka$}}
\put(70.00,50.00){\makebox(0.00,0.00){$\tlustatecka$}}
\put(-10.00,50.00){\makebox(0.00,0.00){$W_3=$}}
\put(70.00,50.00){\line(0,-1){30.00}}
\put(60.00,20.00){\line(1,-2){10.00}}
\put(60.00,20.00){\line(0,-1){20.00}}
\put(60.00,20.00){\line(-1,-2){10.00}}
\put(40.00,20.00){\line(-1,-2){10.00}}
\put(40.00,20.00){\line(0,-1){20.00}}
\put(40.00,20.00){\line(-1,-1){20.00}}
\put(70.00,50.00){\line(1,-3){10.00}}
\put(70.00,50.00){\line(-1,-3){10.00}}
\put(70.00,50.00){\line(-1,-1){30.00}}
\put(70.00,50.00){\line(1,-1){30.00}}
\put(50.00,70.00){\line(1,-1){20.00}}
\put(50.00,70.00){\line(0,-1){20.00}}
\put(50.00,70.00){\line(-1,-1){20.00}}
\put(50.00,90.00){\line(0,-1){20.00}}
\end{picture}}
\]

We claim that the monomial  $W_n$ occurs in $c_n$ written as a linear
combination of monomials with
a non-trivial coefficient.
It is clear that $W_n$ cannot appear in $\nu \circ_{n^2} \mu$, since the
rightmost input of $W_n$ is the input of $\xi$, while the rightmost
inputs of all monomials constituting 
$\nu \circ_{n!} \mu $ are that of $\mu$. On the other hand, it
is clear that the degree-$1$ tree
\begin{equation}
\label{zima_prichazi}
x_n := (\cdots ((\xi \circ_{n-1}\mu)\circ_{n-2}\mu) \cdots)
\circ_1 \mu
\end{equation}
is the unique monomial such that $W_n = \mu \circ_n x_n$. 
For example, for $n=3$, $x_3$ is represented by the degree-$1$ tree
\[
{
\unitlength=.5pt
\begin{picture}(100.00,90.00)(0.00,0.00)
\put(60.00,20.00){\makebox(0.00,0.00){$\tecka$}}
\put(40.00,20.00){\makebox(0.00,0.00){$\tecka$}}
\put(70.00,50.00){\makebox(0.00,0.00){$\tlustatecka$}}
\put(105.00,45.00){\makebox(0.00,0.00){.}}
\put(70.00,50.00){\line(0,-1){30.00}}
\put(60.00,20.00){\line(1,-2){10.00}}
\put(60.00,20.00){\line(0,-1){20.00}}
\put(60.00,20.00){\line(-1,-2){10.00}}
\put(40.00,20.00){\line(-1,-2){10.00}}
\put(40.00,20.00){\line(0,-1){20.00}}
\put(40.00,20.00){\line(-1,-1){20.00}}
\put(70.00,50.00){\line(1,-3){10.00}}
\put(70.00,50.00){\line(-1,-3){10.00}}
\put(70.00,50.00){\line(-1,-1){30.00}}
\put(70.00,50.00){\line(1,-1){30.00}}
\put(70.00,50.00){\line(0,1){30.00}}
\end{picture}}
\]
The monomial $x_n$ occurs in $\nu$ with a
nontrivial coefficient, so $W_n$ appears in $c_n$ with the same
nontrivial coefficient.\footnote{Inspecting the pictorial presentation
  of $x_n$ we easily establish that this coefficient equals
  $(-1)^{n+1} (n-1)!$.}

Let us prove that $c_n$ is not a boundary. 
Assume the existence of a degree $2$ element $b_n$ such that $c_n =
\pa(b_n)$. This would in particular mean that the coefficient of $W_n$ in $\pa(b_n)$ is non-zero. The whistle-blower $W_n$ was defined in such a way that all
internal edges of the corresponding tree $W_n$ connect non-fat vertices $\tecka$ representing $\mu$
with the fat vertex $\tlustatecka$, as in the graphical representation of $W_3$ above. All
trees whose differentials may contain $W_n$ are obtained by
contracting an internal edge of $W_n$. This contraction produces a vertex
with $3n-2$ inputs, while there is no generator of the cobar complex of this arity.   
\end{proof}

\begin{remark}
The result we just proved establishes that the cobar complex $\Omega((\tAss^n_{1})^c)$ has homology classes of positive degree, at least of weight $n+2$. We do not know if that is the smallest value of weight for which non-trivial homology classes exist. It is also worth noting that our proof was using the characteristic zero assumption in a rather crucial way; it would be interesting to see if it can be relaxed.
\end{remark}

To conclude this section, let us outline an alternative proof of the fact that the operad $\ptildeAss^n_0$ is not Koszul for $n=8$ (the case of a particular interest in the following section), not relying directly on the knowledge of its
Koszul dual; we believe this proof is of independent interest. To that end, we show that the minimal model of the operad  $\ptildeAss^8_0$ has
a gap of finite length, so that Proposition \ref{prop:Gap} applies.  We begin with the following general statement.

\begin{lemma}
\label{dnes_mi_neni_dobre}
Let  $\mathcal{P}$ be a quadratic operad generated by operations of the same arity $n \geq 2$ and of the same homological degree $d$.
Then the generators of the minimal model for $\mathcal{P}$ in weight $1$, $2$ and $3$ are concentrated in homological degrees $d$,
$2d+1$ and $3d+2$, respectively.
\end{lemma}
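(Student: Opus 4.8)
The plan is to build the minimal model $(\mathcal{T}(\mathcal{E}),\partial)$ by the standard weight-by-weight obstruction procedure and to track homological degrees at each step. Throughout I use two bookkeeping facts. First, minimality forces $\partial$ to have no linear part, i.e.\ $\partial(\mathcal{E})$ consists of decomposable tree tensors (products of at least two generators). Second, since every generator of $\mathcal{P}$ has homological degree $d$, a tree tensor whose vertices are labelled by generators of weights summing to $p$ carries homological degree equal to the sum of the degrees of its labels; in particular $\mathcal{T}(\mathcal{X})^{(p)}$ (where $\mathcal{X}=\mathcal{E}^{(1)}$) is concentrated in degree $pd$, and so is $\mathcal{P}^{(p)}$. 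Because $\partial$ lowers homological degree by one and $(\mathcal{T}(\mathcal{E}),\partial)$ must have homology $\mathcal{P}$, in each weight $p$ the homology is forced to sit in degree $pd$; the generators $\mathcal{E}^{(p)}$ are then precisely what one adjoins to annihilate the excess homology of the already-constructed approximation.

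In weight $1$ there is nothing to do: $\mathcal{E}^{(1)}=\mathcal{X}$ is concentrated in degree $d$ with $\partial=0$. In weight $2$ the only decomposables are the elements of $\mathcal{T}(\mathcal{X})^{(2)}$, sitting in degree $2d$, and the approximation $(\mathcal{T}(\mathcal{X}),0)$ has weight-$2$ homology equal to all of $\mathcal{T}(\mathcal{X})^{(2)}$, which is too large by exactly the relation space $\mathcal{R}$. To cut $\mathcal{R}$ down one adjoins generators mapping isomorphically onto $\mathcal{R}\subset\mathcal{T}(\mathcal{X})^{(2)}$; since $\mathcal{R}$ lies in degree $2d$ and $\partial$ drops degree by one, these generators lie in degree $2d+1$. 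Thus $\mathcal{E}^{(2)}\cong\mathcal{R}$ is concentrated in degree $2d+1$, the map $\partial\colon\mathcal{E}^{(2)}\to\mathcal{T}(\mathcal{X})^{(2)}$ is injective, and the corrected complex has the correct weight-$2$ homology $\mathcal{P}^{(2)}$.

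The weight-$3$ step is the heart of the argument. The weight-$3$ part of the current approximation $\mathcal{T}(\mathcal{E}^{(1)}\oplus\mathcal{E}^{(2)})$ splits into two homogeneous pieces: the tree tensors with three $\mathcal{X}$-vertices, forming $\mathcal{T}(\mathcal{X})^{(3)}$ in degree $3d$ with $\partial=0$; and the ``mixed'' tree tensors with one $\mathcal{E}^{(2)}$-vertex and one $\mathcal{X}$-vertex, in degree $3d+1$, whose differential replaces the $\mathcal{E}^{(2)}$-vertex by its image in $\mathcal{R}$ and so lands in $\mathcal{T}(\mathcal{X})^{(3)}$. The image of this mixed differential is spanned by all $\rho\circ_i\mu$ and $\mu\circ_i\rho$ with $\rho\in\mathcal{R}$, which is exactly the weight-$3$ component of the operadic ideal generated by $\mathcal{R}$; hence the degree-$3d$ homology is already $\mathcal{T}(\mathcal{X})^{(3)}/(\mathcal{R})^{(3)}=\mathcal{P}^{(3)}$, and no generator of degree $3d+1$ is required. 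The only surviving excess homology sits in degree $3d+1$ and equals the kernel of the mixed differential, i.e.\ the space of syzygies among the relations. Killing it requires generators mapping onto that kernel, which, again because $\partial$ lowers degree by one, must sit in degree $3d+2$. Choosing $\mathcal{E}^{(3)}$ isomorphic to this syzygy space with $\partial$ injective introduces no new homology, so the weight-$3$ generators are concentrated in degree $3d+2$, completing the proof.

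The step I expect to carry the real content is the identification, in weight $3$, of the image of the mixed differential with the weight-$3$ part of the ideal $(\mathcal{R})$. This is precisely where quadraticity is used: for a quadratic operad the weight-$3$ relations are exhausted by composing a single quadratic relation with one further generator on either side, which is exactly the shape of the mixed tree tensors. Once this is in place, the vanishing of degree-$3d$ excess homology---and hence the absence of weight-$3$ generators in degree $3d+1$---is automatic, and the only generators needed are the degree-$(3d+2)$ ones detecting syzygies.
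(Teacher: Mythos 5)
Your argument is correct and follows essentially the same route as the paper's proof: you build the minimal model weight by weight, split the weight-$3$ part of the two-step approximation into the pure piece $\mathcal{T}(\mathcal{X})^{(3)}$ in degree $3d$ and the mixed piece in degree $3d+1$, identify the image of the mixed differential with the weight-$3$ part of the ideal $(\mathcal{R})$ so that the degree-$3d$ homology is already $\mathcal{P}$, and conclude that only degree-$(3d+1)$ classes remain to be killed, forcing $\mathcal{E}^{(3)}$ into degree $3d+2$. Your explicit remark on where quadraticity enters is a nice addition but does not change the substance of the argument.
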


\begin{proof}
By assumption, $\mathcal{P}=\mathcal{P}(\mathcal{X},\mathcal{R})$ with the generating collection
$\mathcal{X}$ concentrated in arity $n$ and homological degree $d$. Since $\mathcal{P}$ is quadratic, $\mathcal{R}$ must be concentrated
in arity $2n-1$ and homological degree~$2d$. The $2$-step approximation to the minimal model for $\mathcal{P}$ (not taking into
account higher syzygies) is therefore of the form
\[
\mathcal{P} \stackrel {\rho_2} \longleftarrow 
\big(\mathcal{T}(\EE1,\EE2),\pa\big),
\]
with the weight~$1$ part  $\EE1$  concentrated in arity $n$ and homological degree $d$,
and the weight~$2$ part $\EE2$ in arity $2n-1$ and homological degree
$2d+1$. The image $\pa(\EE2)$ generates the operadic ideal
of relations and  $\pa|_{\EE2}$ is a monomorphism.

The three-step approximation to the minimal model for $\mathcal{P}$ is
of the form 
\[
\mathcal{P} \stackrel {\rho_3} \longleftarrow 
\big(\mathcal{T}(\EE1,\EE2,\EE3),\pa\big),
\]
where $\pa(\EE3)$ kills the homology classes in the kernel of
$H(\rho_2)$ in weight $3$ and arity $3n-2$. 
Notice that the weight~$3$ part 
$\mathcal{T}(\EE1,\EE2)^{(3)}$ of $\mathcal{T}(\EE1,\EE2)$ decomposes as
\[
\mathcal{T}(\EE1,\EE2)^{(3)} = \mathcal{T}(\EE1)^{(3)} \oplus 
\mathcal{T}(\EE1,\EE2)^{(1,1)}, 
\]
where $\mathcal{T}(\EE1,\EE2)^{(1,1)}$ is the subspace of
$\mathcal{T}(\EE1,\EE2)$ spanned by infinitesimal compositions of one element of
$\EE1$ with one element of $\EE2$.
The kernel of $H(\rho_2)(3n-2)$ is therefore 
concentrated in homological degrees  $3d$ and  $3d+1$.
Observing that
\hbox {$H_{3d}(\rho_2)(3n-2)$} is an isomorphism
\[
H_{3d}\big(\mathcal{T}(\EE1,\EE2),\pa\big)(3n-2)
\cong \mathcal{T}(\EE1)/(\pa \EE2) (3n-2) \cong \mathcal{P}(3n-2),
\] 
we conclude that the only elements to be killed by $\EE3$ are of degree 
$3d+1$. This finishes the proof.
\end{proof}

\begin{remark}
Using methods of \cite{DoKh2}, it is possible to prove a stronger version of Lemma \ref{dnes_mi_neni_dobre} stating that for any quadratic operad $\mathcal{P}$ (with generators of any arities and homological degrees), the  $k$-th Quillen homology of $\mathcal{P}$ is concentrated in weight $k$ for $k\le 3$. 
\end{remark}

The proof of non-Koszulness now goes as follows. Numerical calculations using Gr\"obner bases for operads find the initial terms of the Poincar\'e series for  $\ptildeAss^8_0$ as
\[
t + t^8 + 7t^{15} + 69 t^{22} + 790 t^{29} + 9842 t^{36} + \cdots. 
\]
Using Corollary \ref{cor:FunEq} (ii),  one calculates that the Poincar\'e series
for the generators of the minimal model of  $\ptildeAss^8_0$ is 
\[
t + t^8 + t^{15} + 0 t^{22} + 0 t^{29} + 0 t^{36} + \cdots .
\]
We see that the Euler characteristic $\chi(\EE3)$ of the space of generators of the minimal model for $\ptildeAss^8_0$ in arity $22$ vanishes. By
Lemma~\ref{dnes_mi_neni_dobre}, $\EE3$ is concentrated in degree $2$, so the vanishing of $\chi(\EE3)$ implies
that $\EE3 = 0$.  Meanwhile, analyzing the proof of Theorem \ref{th:nonKoszul}, we see that in fact we did not use the Koszul duality as such:
in this proof, $\Omega((\tAss^n_{1})^c)$ may be replaced by the two-step approximation to the minimal model of  $\ptildeAss^n_0$. Therefore, the 
two-step approximation to the minimal model is not acyclic in positive degrees, and the minimal model must have a generator of higher arity, so by Proposition \ref{prop:Gap}, the operad $\ptildeAss^8_0$ is not Koszul. 

\section{The positivity criterion of Koszulness is not decisive for the operad $\ptildeAss^8_0$}\label{sec:inverse}

In this section, we consider the possibility of using the positivity criterion of Koszulness for the operad $\ptildeAss^n_0$.  Since the Koszul dual of this operad is a very simple cooperad $\left(\tAss^n_{1}\right)^c$, it is natural to try to prove non-Koszulness by establishing that the compositional inverse of the Poincar\'e series of the latter cooperad has negative coefficients. This works for $n\le 7$, as shown in \cite{MaRe1,MaRe2}, but it turns out that for $n=8$ the inverse series does not have any negative coefficients, which we demonstrate below. For an idea of a different proof using the saddle point method, see~\cite{Speyer}. 

We first recall a classical result on inversion of power series. To state it, we use, for a  formal power series $F(t)$, the notation $\left[t^k\right]F(t)$ for the coefficient of $t^k$ in $F(t)$, and the notation $F(t)^{\langle -1\rangle}$ for the compositional inverse of $F(t)$ (if that inverse exists).

\begin{proposition}[Lagrange's inversion formula {\cite[Sec.~5.4]{Stan2}}]\label{prop:LIT}
Let $f(t)$ be a formal power series without a constant term and with a nonzero coefficient of $t$.  Then $f(t)$ has a compositional inverse, and
 \[
\left[t^k\right]f(t)^{\langle -1\rangle}=\frac1k \left[u^{k-1}\right]\left(\frac{u}{f(u)}\right)^k .
 \]
\end{proposition}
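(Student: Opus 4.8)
The plan is to give the classical proof via the algebra of formal Laurent series and their residues, extending the notation $\left[t^k\right]$ to pick out the coefficient of $t^k$ in an arbitrary formal Laurent series. First I would dispose of the existence of the inverse: writing $f(t)=a_1t+a_2t^2+\cdots$ with $a_1\neq0$, the identity $f\big(g(t)\big)=t$ for an unknown $g(t)=b_1t+b_2t^2+\cdots$ unwinds into a triangular system in which the coefficient $b_k$ is determined by $b_1,\dots,b_{k-1}$ together with the nonzero scalar $a_1$; solving recursively produces a unique $g$ with $b_1=a_1^{-1}\neq0$, and a symmetric argument shows $g$ is a two-sided compositional inverse, so $f(t)^{\langle-1\rangle}=g$ is well defined. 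It then remains only to identify $b_k=\left[t^k\right]g(t)$.

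The engine of the proof is two elementary facts about the formal residue $\left[u^{-1}\right]$. The first is that $\left[u^{-1}\right]F'(u)=0$ for every formal Laurent series $F$, since differentiation sends $u^n$ to $nu^{n-1}$ and the only term that could contribute to $u^{-1}$ is the constant term $u^0$, whose derivative vanishes; as an immediate corollary, the integration-by-parts identity $\left[u^{-1}\right](PQ')=-\left[u^{-1}\right](P'Q)$ holds for all $P,Q$. The second is the change-of-variables formula: if $\phi(u)=c_1u+\cdots$ with $c_1\neq0$, then
\[
\left[u^{-1}\right]\!\big(F(\phi(u))\,\phi'(u)\big)=\left[t^{-1}\right]F(t)
\]
for every formal Laurent series $F$. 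By linearity (and the usual convergence of these substitutions in the Laurent-series topology) it suffices to check this on monomials $F(t)=t^n$. For $n\neq-1$ both sides vanish, the left because $\phi(u)^n\phi'(u)=\tfrac1{n+1}\frac{d}{du}\phi(u)^{n+1}$ is an exact derivative; the delicate case is $n=-1$, where one must show $\left[u^{-1}\right]\big(\phi'(u)/\phi(u)\big)=1$. This follows by factoring $\phi(u)=c_1u\big(1+h(u)\big)$ with $h(0)=0$ and observing that the logarithmic derivative splits as $\phi'/\phi=u^{-1}+h'(u)/(1+h(u))$, whose second summand is an honest power series contributing nothing to the residue. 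I expect this logarithmic case to be the only genuine obstacle in the whole argument.

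With these tools in hand the computation is a short chain. Taking $F(t)=t^{-k-1}g(t)$, we have $\left[t^{-1}\right]F(t)=\left[t^k\right]g(t)=b_k$; substituting $\phi=f$ and using $g(f(u))=u$ gives
\[
b_k=\left[u^{-1}\right]\!\Big(\frac{u\,f'(u)}{f(u)^{k+1}}\Big).
\]
Now I would rewrite $f(u)^{-k-1}f'(u)=-\tfrac1k\frac{d}{du}f(u)^{-k}$ and apply integration by parts with $P=u$ and $Q=f(u)^{-k}$ to obtain $b_k=\tfrac1k\left[u^{-1}\right]f(u)^{-k}$. Finally, since $f(u)^{-k}=u^{-k}\big(u/f(u)\big)^k$, extracting the coefficient of $u^{-1}$ is the same as extracting the coefficient of $u^{k-1}$ in $\big(u/f(u)\big)^k$, which yields exactly $\left[t^k\right]f(t)^{\langle-1\rangle}=\tfrac1k\left[u^{k-1}\right]\big(u/f(u)\big)^k$, as claimed.
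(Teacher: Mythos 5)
The paper does not prove this proposition at all --- it is quoted directly from the cited reference (Stanley, \emph{Enumerative Combinatorics}, vol.~2, Sec.~5.4). Your argument is the standard formal-residue proof of Lagrange inversion (vanishing of residues of derivatives, the change-of-variables formula with the logarithmic-derivative case handled separately, then integration by parts applied to $b_k=\left[u^{-1}\right]\bigl(u f'(u)/f(u)^{k+1}\bigr)$), and it is complete and correct; this is essentially the proof found in the cited source, so there is nothing of substance to compare.
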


Let us now prove the main result of this section. Namely, we show that the compositional inverse of the power series $g_{\left(\tAss^8_{1}\right)^c}(t)$ has nonnegative coefficients, and hence the positivity criterion of Corollary~\ref{cor:GKweak} cannot be used to establish the non-Koszulness of the operad $\ptildeAss^n_0$.  

\begin{theorem}
The compositional inverse of the power series 
 \[
g_{\left(\tAss^8_{1}\right)^c}(t)=t-t^8+t^{15}
 \]
is of the form $t\,h(t^7)$, where $h$ is a power series with positive coefficients. 
\end{theorem}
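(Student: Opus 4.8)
The plan is to apply Lagrange's inversion formula (Proposition~\ref{prop:LIT}) to $f(t) = t - t^8 + t^{15}$ and extract an explicit formula for the coefficients of the compositional inverse, then prove these coefficients are positive. First I would observe that $f(t) = t(1 - t^7 + t^{14}) = t(1 - t^7 + (t^7)^2)$, so the structure is genuinely a series in $u := t^7$. This explains the predicted shape $t\,h(t^7)$: writing $\left[t^k\right]f(t)^{\langle -1\rangle} = \frac1k \left[u^{k-1}\right]\bigl(1 - u^7 + u^{14}\bigr)^{-k}$, the binomial-type expansion of $(1 - u^7 + u^{14})^{-k}$ contains only powers of $u^7$, so the inner coefficient extraction is nonzero only when $k - 1 \equiv 0 \pmod 7$, i.e. $k \equiv 1 \pmod 7$. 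Thus only coefficients of $t^{7m+1}$ survive, confirming the factorization $t\,h(t^7)$, and reducing the problem to showing that each surviving coefficient is strictly positive.

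The key computational step is to expand $(1 - u^7 + u^{14})^{-k}$ as a hypergeometric-type sum. Setting $v = u^7$, I would write $(1 - v + v^2)^{-k} = \sum_j \binom{-k}{j}(-v + v^2)^j$ or, more usefully, use the expansion into a double sum $(1 - v(1-v))^{-k} = \sum_{i\ge 0} \binom{k+i-1}{i} v^i (1-v)^i$ and then expand $(1-v)^i$ by the binomial theorem. This produces the coefficient of $t^{7m+1}$ in the inverse as a single binomial sum of the form $\frac{1}{7m+1}\sum_\ell (-1)^{\ell} \binom{7m+1 + m - 1 - \ell}{\,\cdots\,}\binom{\cdots}{\ell}$ (the exact index ranges to be pinned down during the calculation). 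The point of casting it this way is that the summand is hypergeometric in the summation variable, which is precisely the setting where Zeilberger's algorithm applies.

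The hard part will be establishing positivity of this alternating binomial sum, since the terms oscillate in sign and no term-by-term comparison is available. My plan here is exactly the one advertised in the abstract: feed the hypergeometric sum to Zeilberger's algorithm to obtain a linear recurrence (with polynomial coefficients in $m$) satisfied by the coefficient sequence $c_m := \left[t^{7m+1}\right]f(t)^{\langle -1\rangle}$. Once such a recurrence is in hand, positivity becomes an induction: one verifies positivity of the finitely many initial terms directly (these match the numerically computed series $t + t^8 + 7t^{15} + 69 t^{22} + \cdots$ divided through appropriately), and then shows the recurrence propagates positivity, by checking that the polynomial coefficients have the right signs for all $m$ beyond some explicit bound. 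The delicate issue is that the recurrence coefficients may themselves change sign for small $m$, so I expect the induction to require splitting off an initial segment and possibly rewriting the recurrence (e.g. clearing denominators and grouping consecutive terms) so that the propagation step is manifestly sign-preserving; this sign bookkeeping, rather than the derivation of the recurrence, is where the real work lies.
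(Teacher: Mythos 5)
Your first two steps (the $t\,h(t^7)$ shape via Lagrange inversion, the explicit alternating binomial sum for the coefficients, and running Zeilberger's algorithm to get a linear recurrence with polynomial coefficients) are exactly what the paper does. The gap is in your final step. The recurrence that Zeilberger's algorithm produces is the three-term relation $s_0(n)x_n-s_1(n)x_{n-1}+s_2(n)x_{n-2}=0$ with all three polynomials $s_i$ eventually positive, so ``the polynomial coefficients have the right signs'' can never make the propagation step sign-preserving: you get $x_n=\frac{s_1(n)}{s_0(n)}x_{n-1}-\frac{s_2(n)}{s_0(n)}x_{n-2}$, and positivity of the two previous terms tells you nothing without a \emph{lower bound on the ratio} $x_{n-1}/x_{n-2}$. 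The natural fix --- inductively maintaining $a_n/a_{n-1}\ge C$ --- requires $C^2s_0(n)-Cs_1(n)+s_2(n)<0$, i.e.\ $C$ strictly between the two roots $\lambda_-\approx 16.944$ and $\lambda_+\approx 17.423$ of the characteristic polynomial. But the coefficient sequence $a_n$ of $h$ is the \emph{minimal} (recessive) solution of the recurrence: its ratio $a_{n+1}/a_n$ tends to the smaller root $\lambda_-=5^{15}/21^7$, because $1/\lambda_-$ is the radius of convergence of $h$. Hence any hypothesis $a_n/a_{n-1}\ge C$ with $C>\lambda_-$ is eventually false, and with $C\le\lambda_-$ the propagation inequality fails; no amount of splitting off an initial segment or regrouping terms repairs this, since positivity of a recessive solution is numerically unstable and cannot be read off by forward induction on the recurrence. (The acknowledgements record that an earlier draft of the paper contained precisely such a gap.)

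The paper's way around this is the piece of the argument your proposal is missing: introduce the auxiliary solution $\{b_n\}$ of the same recurrence with initial conditions $b_0=0$, $b_1=1$. This solution is \emph{dominant} (its ratio tends to $\lambda_+$), and for a dominant solution the ratio induction you describe does work: one checks $b_n/b_{n-1}\ge C$ for an explicit $C$ with $\lambda_-<C<\lambda_+$ starting from $n=50$, which gives $b_n>0$ and identifies the limit ratio as $\lambda_+$. Then $a_n/b_n\to 0$ since consecutive ratios are eventually contracted by a factor near $\lambda_-/\lambda_+<1$, and a telescoping identity shows that $\frac{a_n}{b_n}-\frac{a_{n-1}}{b_{n-1}}$ keeps a constant sign (negative, from the computed value at $n=2$) for $n\ge 3$. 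A strictly decreasing sequence with limit $0$ has all terms positive, so $a_n>0$. You would need to add this comparison-with-a-dominant-solution argument (or some equivalent, e.g.\ the saddle-point analysis of \cite{Speyer}) for your plan to close.
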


\begin{proof}
First, let us recall the usual argument explaining the form of the inverse series. By Proposition \ref{prop:LIT}, we have 
 \[
\left[t^k\right](t-t^8+t^{15})^{\langle -1\rangle}=\frac1k \left[u^{k-1}\right]\left(\frac{u}{u-u^8+u^{15}}\right)^k=\frac1k \left[u^{k-1}\right]\left(\frac{1}{1-u^7+u^{14}}\right)^k ,
 \]
and the coefficients on the right vanish unless $k=7n+1$, so the inverse series is of the form $t\,h(t^7)$, where $h$ is some formal power series. 

Let us start the asymptotic analysis of the coefficients of the series $h(t)$. 

\begin{lemma}\label{lm:radius}
The radius of convergence of $h(t)$ is equal to $\frac{21^7}{5^{15}}$.
\end{lemma}

\begin{proof}
The radius of convergence of  $(t-t^8+t^{15})^{\langle -1\rangle}$ is equal to the maximal $r$ for which the inverse function of $t-t^8+t^{15}$ is analytic for the arguments whose modulus is smaller than~$r$. It is obvious that such $r$ is the value of $t-t^8+t^{15}$ at the modulus of the smallest zero of 
 \[
\left(t-t^8+t^{15}\right)'=1-8t^7+15t^{14}=(1-3t^7)(1-5t^7) .
 \]
As the latter modulus is manifestly $\frac{1}{\sqrt[7]5}$, the radius of convergence of the inverse series is 
 \[
\frac{1}{\sqrt[7]5}\left(1-\frac15+\frac1{25}\right)=\frac{1}{\sqrt[7]5}\frac{21}{25} .
 \]
Now, as $(t-t^8+t^{15})^{\langle -1\rangle}=t\,h(t^7)$, the radius of convergence of $h(t)$ is equal to $\left(\frac{1}{\sqrt[7]5}\frac{21}{25}\right)^7=\frac{21^7}{5^{15}}$.
\end{proof}

\begin{lemma}
The $n$-th coefficient of $h(t)$ is equal to 
 \[
a_n=\frac{1}{7n+1}\sum_{k=0}^{\lfloor n/3\rfloor}(-1)^k\binom{7n+k}{k}\binom{7n+1}{n-3k} .
 \]
\end{lemma}

\begin{proof}
Continuing the computation that utilizes the Lagrange's inversion formula, we see that the $n$-th coefficient of $h$, or equivalently the coefficient of $t^{7n+1}$ of $(t-t^8+t^{15})^{\langle -1\rangle}$, is equal to 
 \[
\frac1{7n+1} \left[u^{7n}\right]\left(\frac{1}{1-u^7+u^{14}}\right)^{7n+1}=\frac1{7n+1} \left[v^{n}\right]\left(\frac{1}{1-v+v^2}\right)^{7n+1} 
 \] 
It remains to note that \[\frac1{1-v+v^2}=\frac{1+v}{1+v^3} ,\] so 
 \[
\left(\frac{1}{1-v+v^2}\right)^{7n+1} = \left(\frac{1+v}{1+v^3}\right)^{7n+1}=\left(\sum_{i\ge 0}\binom{7n+1}{i} v^i\right)\left(\sum_{j\ge 0}(-1)^j\binom{7n+j}{j}v^{3j}\right),
 \]
therefore the coefficient of $t^{7n+1}$ is given by the requested formula  
 \[
a_n=\frac{1}{7n+1}\sum_{k=0}^{\lfloor n/3\rfloor}(-1)^k\binom{7n+k}{k}\binom{7n+1}{n-3k} .
 \] 
\end{proof}

The expression $a_n$ is given by the formula which is a sum of ``hypergeometric'' terms, we see that Zeilberger's algorithm \cite[Ch.~6]{AeqB} applies. We used the interface to it provided by the \texttt{sumrecursion} function of \textsf{Maple}; this function implements the Koepf's version of Zeilberger's algorithm \cite[Ch.~7]{Koepf}. This function instantly informs us that the sequence $\{a_n\}$ is a solution to a rather remarkable three term finite difference equation 
\begin{equation}\label{eq:3term}
s_0(n)x_n-s_1(n)x_{n-1}+s_2(n)x_{n-2}=0 , 
\end{equation}
where
\begin{align*}
s_0(n)&=2187\left(\prod_{k=0}^{13}(7n+1-k)\right) (215870371n^6-1295222226n^5+2527684225n^4\\
&-
658627050n^3-3846578936n^2+4812446376n-1760658480) ,
\end{align*}
\begin{align*}
s_1(n)&=\left(\prod_{k=0}^{6}(7n-6-k)\right)(13362081892033179314n^{13}-126939777974315203483n^{12}\\
&+485734175892096120376n^{11}-848711700458546819207n^{10}+123881005609280551032n^9\\
&+2596574853470043847011n^8-6061259307194791053272n^7+7497470293244974003099n^6\\
&-5912167336650049878706n^5+3092269284168816801572n^4-1062333018859963548504n^3\\
&+228076143949070673408n^2-27319025166066426240n+1361946602938521600) ,
\end{align*}
and 
\begin{align*}
s_2(n)&=15(15n-14)\left(\prod_{k=0}^{12}(15n-16-k)\right)(215870371n^6-710371340n^4\\
&+ 817295010n^3-370521431n^2+73255350n-5085720) .
\end{align*}

The polynomials $s_i(n)$ are of the same degree $20$, and so our equation is of the type considered by Poincar\'e in \cite{Po}. Namely, in \cite[\S2]{Po} linear finite difference equations of order $k$
 \[
s_0(n)x_n+s_1(n)x_{n-1}+\cdots+s_k(n)x_{n-k}=0 
 \]
are considered, with the additional assumption that $s_0(n)$, \ldots, $s_k(n)$ are polynomials of the same degree $d$. To such an equation, one associates its characteristic polynomial
 \[
\chi(t)=\alpha_0t^k+\alpha_1t^{k-1}+\cdots+\alpha_k=0 ,
 \]
where $\alpha_i$ is the coefficient of $t^d$ in $s_i(n)$. If the absolute values of the complex roots of $\chi(t)$ are pairwise distinct, then for any solution $\{a_n\}$ to our equation, the limit 
 \[
\lim_{n\to\infty}\frac{a_n}{a_{n-1}}
 \]
exists and is equal to one of the roots of $\chi(t)$. Usually, that root will be the one which is maximal in absolute value. The particular case when the root is the minimal in absolute value is the hardest both for computations and for the qualitative analysis of the asymptotic behaviour, since in this case the corresponding solution is unique up to proportionality, and so the situation is not stable under small perturbations.  In our case the polynomial $\chi(t)$ is
 \[
320194878522045287813073t^2-11004249007610680591789502t+94528316575149444580078125 ,
 \]
and its roots are 
 \[
\lambda_-=\frac{30517578125}{1801088541}\approx 16.943963 \quad \text{and}\quad \lambda_+=\frac{14348907}{823543}\approx 17.423385, 
 \]
so Poincar\'e theorem applies. In fact, $\lambda_-=\frac{5^{15}}{21^7}$,
so by Lemma \ref{lm:radius} it is equal to the inverse of the radius of convergence of $h(t)$. By the usual ratio formula for the radius of convergence, we see that 
 \[
\lim_{n\to\infty}\frac{a_{n}}{a_{n-1}}=\lambda_- .
 \]

Let us consider the auxiliary sequence $\{b_n\}$ satisfying the same finite difference equation \eqref{eq:3term} and the initial conditions $b_0=0$, $b_1=1$. 

\begin{lemma}
All terms of the sequence $\{b_n\}$ are positive for $n>0$, and we have 
 \[
\lim_{n\to\infty}\frac{b_n}{b_{n-1}}=\lambda_+.
 \]
\end{lemma}

\begin{proof}
First, let us show that for all $n\ge 50$ we have 
\begin{equation}\label{eq:growth}
\frac{b_n}{b_{n-1}}\ge C ,
\end{equation}
where $C=\frac{b_{50}}{b_{49}}\approx 16.9452857$. This is easy to see by induction on $n$. First, for $n=50$, the statement is obvious. Next, if we suppose that it is true for all values less than the given $n$, we have
 \[
\frac{b_n}{b_{n-1}}=\frac{s_1(n)}{s_0(n)}-\frac{s_2(n)b_{n-2}}{s_0(n)b_{n-1}}>\frac{s_1(n)}{s_0(n)}-\frac{s_2(n)}{s_0(n)C},
 \]
and so it suffices to show that 
 \[
\frac{s_1(n)}{s_0(n)}-\frac{s_2(n)}{s_0(n)C}> C .
 \]
It is easy to check, using computer algebra software, that all the roots of the polynomial $s_0(n)$ are less than $2$, so this polynomial assumes positive values in the given range. Thus, the above inequality is equivalent to 
 \[
0>C^2s_0(n)-Cs_1(n)+s_2(n) .
 \] 
Using computer algebra software, we find that the latter expression is a polynomial in $n$ with the negative leading coefficient and the largest root approximately equal to $24.69$, so the step of induction is proved. We can also check directly that $b_n>0$ for all $0<n<50$, which then implies that $b_n>0$ for all $n>0$.  
Also, by Poincar\'e Theorem, the limit of the ratio $\frac{b_n}{b_{n-1}}$ as $n\to\infty$ is equal to either $\lambda_-$ or $\lambda_+$. However, 
$16.9452857>16.944>\lambda_-$, so the inequality \eqref{eq:growth} shows that the first of the two alternatives is impossible. Hence, the limiting value is $\lambda_+$.  
\end{proof}

Our results thus far imply  that $\lim\limits_{n\to\infty}\frac{a_n}{b_n}=0$, as
 \[
\frac{a_{n+1}}{b_{n+1}}=\frac{a_n}{b_n}\frac{\frac{a_{n+1}}{a_n}}{\frac{b_{n+1}}{b_n}},
 \]
and so $\frac{a_{n+1}}{b_{n+1}}$ is a multiple of $\frac{a_n}{b_n}$ by a factor close to $\frac{\lambda_-}{\lambda_+}<1$ for large~$n$, and thus our sequence can be bounded from above by a geometric sequence with a zero limit.

Now it is easy to complete the proof. We note that 
\begin{multline*}
\frac{a_n}{b_n}-\frac{a_{n-1}}{b_{n-1}}=\frac{s_1(n)a_{n-1}-s_2(n)a_{n-2}}{s_1(n)b_{n-1}-s_2(n)b_{n-2}}-\frac{a_{n-1}}{b_{n-1}}=\\
\frac{(s_1(n)a_{n-1}-s_2(n)a_{n-2})b_{n-1}-(s_1(n)b_{n-1}-s_2(n)b_{n-2})a_{n-1}}{(s_1(n)b_{n-1}-s_2(n)b_{n-2})b_{n-1}}=\\
\frac{s_2(n)(a_{n-1}b_{n-2}-a_{n-2}b_{n-1})}{s_0(n)b_nb_{n-1}}=
\frac{s_2(n)b_{n-2}}{s_0(n)b_n}\left(\frac{a_{n-1}}{b_{n-1}}-\frac{a_{n-2}}{b_{n-2}}\right)
\end{multline*}
All roots of the polynomial $s_2(n)$ are less than $2$ as well, so for $n\ge 3$ the sign of $\frac{a_n}{b_n}-\frac{a_{n-1}}{b_{n-1}}$ is the same as the sign of $\frac{a_{n-1}}{b_{n-1}}-\frac{a_{n-2}}{b_{n-2}}$, and hence the same as the sign of 
\[\frac{a_2}{b_2}-\frac{a_1}{b_1}=-\frac{77813}{276830}<0 .\] Thus, $\left\{\frac{a_n}{b_n}\right\}$ is a strictly decreasing sequence.  For a decreasing sequence with the limit zero, all terms must be positive, and hence $a_n$ is positive for all $n>0$. 
\end{proof}

\bibliographystyle{amsplain}
\providecommand{\bysame}{\leavevmode\hbox to3em{\hrulefill}\thinspace}

\end{document}